\definecolor{Theme}{gray}{0}
\renewenvironment{leftbar}[1][\hsize]
{%
    \MakeFramed{\hsize#1\advance\hsize-\width\FrameRestore}%
}
{\endMakeFramed}
\renewenvironment{proof}[1][\proofname]{\par
   \pushQED{\begin{center}\textcolor{Theme}{\ensuremath{\blacksquare}}\end{center}}%
  \normalfont \topsep6\p@\@plus6\p@\relax
  \trivlist
  \item[\hskip\labelsep
        \bfseries
    #1\@addpunct{.}]\ignorespaces
}{%
  \popQED\endtrivlist\@endpefalse
}
\newtheoremstyle{Thm}  
  {10pt}   
  {20pt}   
  {\itshape}  
  {0pt}       
  {\bfseries} 
  {}         
  {\newline}  
  {\textcolor{Theme}{\thmname{ #1}\thmnumber{ #2}}\textbf{\thmnote{ (#3)}}} 
  \newtheoremstyle{Thmused}  
  {10pt}   
  {20pt}   
  {\itshape}  
  {0pt}       
  {\bfseries} 
  {}         
  {\newline}  
  {\textcolor{Theme}{\thmname{ #1}} \textbf{\thmnote{#3}}} 
\newtheoremstyle{Def}  
  {10pt}   
  {20pt}   
  {}  
  {0pt}       
  {\bfseries} 
  {}         
  {\newline}  
  {\textcolor{Theme}{\thmname{ #1} \thmnumber{#2}}\textbf{\thmnote{ (#3)}}} 
  \newtheoremstyle{Rem}  
  {10pt}   
  {20pt}   
  {}  
  {0 pt}       
  {\bfseries} 
  {}         
  {\newline}  
  {\textcolor{Theme}{\thmname{ #1} \thmnumber{#2}}\textbf{\thmnote{ (#3)}}} 
\theoremstyle{Thmused} 
\theoremstyle{Thm} \newtheorem{Thm}{Theorem}
\theoremstyle{Thm} \newtheorem{Lemma}{Lemma}[section]
\theoremstyle{Thm} \newtheorem{Prop}{Proposition}[section]
\theoremstyle{Thm} \newtheorem{Cor}{Corollary}[Prop]
\theoremstyle{Thm} 
\theoremstyle{Thm} 
\theoremstyle{Def} 
\theoremstyle{Def} 
\theoremstyle{Def} 
\theoremstyle{Rem} \newtheorem*{Rem}{Remark}
\theoremstyle{Rem} 
\newcommand{\R}{\mathbbm{R}}
\newcommand{\C}{\mathbbm{C}}
\newcommand{\Z}{\mathbbm{Z}}
\newcommand{\1}{\mathbbm{1}}
\newcommand{\wt}{\widetilde}
\newcommand{\SL}{\mathop{\rm{SL}}}
\renewcommand{\O}{\mathrm{O}}
\newcommand{\SO}{\mathop{\rm{SO}}}
\newcommand{\U}{\mathrm{U}}
\newcommand{\SU}{\mathop{\rm{SU}}}
\newcommand{\SOe}{\mathop{\rm{SO_e}}}
\newcommand{\Mat}{\mathop{\rm{Mat}}}
\newcommand{\Res}{\mathop{\rm{Res}}}
\newcommand{\Ima}{\mathop{\rm{Im}}}
\newcommand{\Ker}{\mathop{\rm{Ker}}}
\newcommand{\Tr}{\mathop{\rm{Tr}}}
\newcommand{\End}{\mathop{\rm{End}}}
\newcommand{\Id}{\mathop{\rm{Id}}}
\newcommand{\Ind}{\mathop{\rm{Ind}}}
\newcommand{\triv}{\mathrm{triv}}
\newcommand{\g}{\mathfrak{g}}
\newcommand{\h}{\mathfrak{h}}
\newcommand{\n}{\mathfrak{n}}
\newcommand{\Ak}{\mathfrak{k}}
\renewcommand{\a}{\mathfrak{a}}
\newcommand{\sL}{\mathfrak{sl}}
\newcommand{\E}{\mathscr{E}}
\newcommand{\Resi}{\mathcal{R}}
\newcommand{\Reso}{\mathrm{R}}
\newcommand{\Hi}{\mathscr{H}}
\newcommand{\RA}{\mathbf{R}}
\newcommand{\LA}{\mathbf{L}}
\newcommand{\IWk}{{\textrm{\bf k}}}
\newcommand{\IWH}{{\textrm{\bf H}}}
\newcommand{\IWn}{{\textrm{\bf n}}}
\newcommand{\fonction}[5]{\begin{array}{lrcl}
#1: & #2 & \longrightarrow & #3 \\
    & #4 & \longmapsto & #5 \end{array}}
\newcommand{\fracobl}[2]{\raisebox{0.8ex}{$#1$} \Big/ \raisebox{-0.7ex}{$#2$}}
\renewcommand{\textbf}[1]{\begingroup\bfseries\mathversion{bold}#1\endgroup}
\definecolor{Rouge}{RGB}{200,0,0}
\definecolor{Vert}{RGB}{0,147,83}
\definecolor{Bleu}{RGB}{0,89,157}
\title[Resonances of $\square$ on $\SOe(2,2)/\SOe(2,1)$]{Resonances of the d'Alembertian on the Anti-de Sitter space $\SOe(2,2)/\SOe(2,1)$}
\author[Resonances of $\square$ on $\SOe(2,2)/\SOe(2,1)$]{Simon ROBY}
\address{Yau Mathematical Sciences Center, Tsinghua University, Beijing, 100084, China}
\email{roby@mail.tsinghua.edu.cn}
\date{}
\subjclass[2010]{Primary: 22E45, 20G05, 22D10 ; secondary: 43A85,58J50}
\begin{document}

\maketitle

\tableofcontents

\begin{abstract}
We consider the action of the d'Alembertian on the functions on the pseudo-Riemannian 3-dimensional Anti-de Sitter space. We determine the resonances of this operator. With each resonance one can associate a \emph{residue representation}. We give an explicit description of these representations via Langlands parameters. 
\end{abstract}

\section{Introduction}

The resonances has been introduced first in Quantum Mechanics. We can fix the beginning of this notion around 1926, when Schrödinger studied the Stark effect. Roughly speaking they generalize, on non-compact spaces, the eigenvalues of differential operators on compact spaces. For more information about this, one can read for instance \cite{Har07} or \cite{DyaZwo19}.

Let $\SOe(p,q)$ be the connected component of the identity in $\SO(p,q)$, where $p$ and $q$ are positive integers. The purpose of this paper is to calculate the resonances of the d'Alembertian operator on the Anti-de Sitter space $\mathbb{X}:=\fracobl{\SOe(2,2)}{\SOe(2,1)}$. The d'Alembertian operator $\square_\mathbb{X}$ acts on the set $C_c^\infty(\mathbb{X})$ of compactly supported smooth functions on $\mathbb{X}$. The resolvent $\Reso$ of $\square_\mathbb{X}$ is a holomorphic function on the complex complement of the spectrum of $\square_\mathbb{X}$ in the complex plane. If one can find a meromorphic continuation of this function across the spectrum, then the poles of this meromorphic continuation are called the \emph{(scattering) resonances}. 
The study of the resonances of the Laplace operator began on Riemannian symmetric spaces of non-compact type and has been carried out by several authors. The symmetric space has maximal flat subspaces, all of the same dimension, called the rank of the symmetric space. When the rank is one, Guillop\'{e} and Zworski \cite{GuillopeZworski95}, Miatello and Will \cite{MiaWill}, and Hilgert and Pasquale \cite{HilgPasq} computed the resonances with different methods. The two last articles also found the \emph{residue representations} arising from the resonances. For higher rank symmetric spaces, the problem is still open. Partial results were obtained by Mazzeo and Vasy \cite{MaVa05} and Strohmaier \cite{Stro05}. Complete results for most of the rank 2 cases were proved in a series of papers by Hilgert, Pasquale and Przebinda \cite{HiPaPz3,HiPaPz2, HiPaPz}. 

In this article, we determine the resonances and the residue representations of $\square_\mathbb{X}$ acting on $C_c^\infty(\mathbb{X})$. The point is that space $\mathbb{X}$ is not Riemannian any more, but is a \emph{pseudo-Riemannian} symmetric space. This article gives an example of computation of resonances in this context. The general pseudo-Riemannian case (or even the Anti-De Sitter case) is beyong reach at the moment. However, Frahm and Polyxeni obtained first important results in \cite{FraPol23}, where they considered the d'Alembertian operator acting on the pseudo-Riemannian spaces $\fracobl{\U(p,q;\mathbb F)}{\U(1;\mathbb F)\times \U(p-1,q;\mathbb F)}$ (where $\mathbb F$ is either the field of real, complex, quaternionic or octonionic numbers).

In \cite{Andersen01}, a Paley-Wiener type theorem is proved for the Anti-De Sitter symmetric space $\mathbb X$ through the isomorphism of this space with $\SL(2,\R)$. The action of $\SL(2,\R)\times\SL(2,\R)$ on the right and on the left of $\SL(2,\R)$ is equivalent to the action of $\SOe(2,2)$ on $\mathbb X$. This isomorphism makes the action of $\square_{\mathbb X}$ on $C_c^\infty(\mathbb{X})$ correspond to the Casimir of $\SL(2,\R)$ acting on compactly supported functions on $\SL(2,\R)$. This is explained explicitely in Section \ref{section Not and Backg}. In Section \ref{section Resonances}, we determine the resonances. By exploiting the analysis on $\SL(2,\R)$ and in particular the Harish-Chandra-Plancherel Theorem, we can use similar method as the one used in \cite{HilgPasq} to determine them. This analysis can be found in \cite{RobThesis21}. 
In Section \ref{section Residue representations}, we study the residue representations arising from the resonances and we describe them explicitly: Theorem \ref{Thm residue representations} gives the Langlands parameters of the irreducible parts of these representations. The study of these representations cannot be assimilate to what has been done before, because the group acting is not $\SL(2,\R)$, but $\SL(2,\R)\times \SL(2,\R)/\pm \Id\simeq \SOe(2,2)$.

\section{Notations and background}
\label{section Not and Backg}

We shall use the standard notations $\Z_+,~ \Z,~\R, ~\C$ for the nonnegative integers, the integers, the real numbers, the complex numbers. For a complex number $z \in \C$, we denote by $\Re(z)$ and $\Im(z)$ its real and imaginary parts, respectively. We denote by $\RA$ and $\LA$ the usual right and left representations respectively. 

\subsection{Context}
\label{section context} Let $\langle\cdot,\cdot\rangle$ be the bilinear form on $\R^4$ given by
$$\langle x,y\rangle := x_1y_1+x_2y_2-x_3y_3-x_4y_4$$
where $x,y\in \R^4$. Let $\Mat_4(\R)$ be the space $4\times 4$ matrices with real coefficients. We consider the action of $\Mat_4(\R)$ on $\R^4$ by multiplication
$$x\mapsto g\cdot x~~~,~\text{for }g \in {\Mat}_4(\R) \text{ and } x \in \R^4~.$$ 
Let $\O(2,2)$ be the matrix Lie group preserving $\langle\cdot,\cdot\rangle$. We denote by $\wt G :=\SOe(2,2)$ its connected component of the identity. Let $\wt H \subset \wt G$ denote the isotropy subgroup of $(1,0,0,0)\in \R^4$. The group $\wt H$ is then isomorphic to $\SOe(1,2)$. The homogeneous space $\wt G/\wt H$ is the hyperbolic space
$$\mathbb{X}=\{x\in \R^4~|~ \langle x,x\rangle=1\}~.$$
This is a pseudo-Riemannian symmetric space. 
Let $\wt K\simeq \SO(2)\times \SO(2)$ be the maximal compact subgroup of $\wt G$ consisting of the matrices in $\wt G$ with two blocks of $\SO(2)$ on the diagonal. 
Each matrix in $\SO(2)$ is written as
$$k_\theta:=\begin{pmatrix}\cos(\theta)&\sin(\theta)\\-\sin(\theta)&\cos(\theta)\end{pmatrix}~, ~~~~\theta\in \R~.$$

\subsection{An isomorphism with $\SL(2,\R)$}\label{section Iso SL2R}
Let $G=\SL(2,\R)$. The identification $\wt G = G\times G /\pm \Id$ is defined as follows; See for instance in \cite[section 5]{Andersen01}. Let $\Mat_2(\R)$ the set of $2\times 2$ matrices with real coefficients.  
We consider the vector space isomorphism between $\R^4$ and $\Mat_2(\R)$ mapping the canonical basis of $\R^4$ onto the basis consisting of the vectors
$$\begin{pmatrix}1&0\\0&1\end{pmatrix},~\begin{pmatrix}0&-1\\1&0\end{pmatrix},~\begin{pmatrix}0&1\\1&0\end{pmatrix},~\begin{pmatrix}1&0\\0&-1\end{pmatrix}~,$$ namely $$\R^4\ni x=(x_1,x_2,x_3,x_4)\mapsto g_x=\begin{pmatrix}x_1+x_4&-x_2+x_3\\x_2+x_3&x_1-x_4\end{pmatrix}~.$$
We introduce a bilinear form $\langle g_x,g_y\rangle:=\langle x,y\rangle$ on $\Mat_2(\R)$. 
We consider the left and right action of $G\times G$ on $\Mat_2(\R)$:
$$(g_1,g_2)\cdot X := g_1Xg_2^{-1}~,$$
where $(g_1,g_2)\in G\times G \text{ and } X\in {\Mat}_2(\R)$. 
A direct computation shows that 
$$\langle g_x,g_y\rangle:=\langle x,y\rangle=\frac12 \det(g_y)\Tr\big(g_xg_y^{-1}\big)=\frac12 \det(g_x)\Tr\big(g_yg_x^{-1}\big)~.$$
Hence, the form $\langle \cdot,\cdot\rangle$ on $\Mat_2(\R)$ is $G\times G$ invariant. 
The elements $(g_1,g_2)$ and $(-g_1,-g_2)$ act in the same way, so $G\times G/\pm\Id$ can be embedded in the isometry group preserving this bilinear form of signature (2,2), which is $\O(2,2)$. By uniqueness of the component of the identity, we have the Lie group isomorphism $$G\times G/\pm\Id\simeq \wt G~.$$

Moreover $x\in \mathbb{X}$ is equivalent to  $\det(g_x)=1$. Thus the isomorphism $x\mapsto g_x$ restricts to a diffeomorphism $\mathbb{X}$ and $G$.
Consequently we have $C^\infty(G)\simeq C^\infty(\mathbb{X})$. We denote this isomorphism by \begin{equation}\label{eq iso C(G) to C(X)}\begin{array}{rcl}
C^\infty(G) & \longrightarrow & C^\infty(\mathbb{X}) \\
f & \longmapsto & \wideparen{f}~. \end{array}\end{equation}
Via this isomorphism, the two sided action of $G\times G$ on $C^\infty({G})$ \begin{equation}\label{eq action of GxG on function}
    (g_1,g_2)\cdot f(g):=\LA (g_1)\cdot \RA (g_2)\cdot f(g) = f(g_1^{-1}gg_2)~~, \text{ for all }g,g_1,g_2 \in {G}~,
\end{equation}
corresponds to the action on the left of $\wt G$ on $C^\infty(\mathbb{X})$.

In spherical coordinates, each element $x\in \mathbb{X}$ is given by
$$x=x(t,\psi_1,\psi_2) = (\cosh t \cos\psi_1,\cosh t \sin\psi_1,\sinh t \sin\psi_2,\sinh t \cos\psi_2)\in \mathbb{X}~,$$ 
where $t,\psi_1,\psi_2\in \R$.
Let $\theta= \frac{\psi_1-\psi_2}2$ and $\phi= \frac{\psi_1+\psi_2}2$. 
Through the map $x\mapsto g_x$, the corresponding element in $G$ is  $$g(t,\theta,\phi):= g_{x(t,\phi+\theta,\phi-\theta)} = k_\theta a_t k_\phi = \begin{pmatrix}\cos(\theta)&\sin(\theta)\\-\sin(\theta)&\cos(\theta)\end{pmatrix}\begin{pmatrix}e^t&0\\0&e^{-t}\end{pmatrix}\begin{pmatrix}\cos(\phi)&\sin(\phi)\\-\sin(\phi)&\cos(\phi)\end{pmatrix}~.$$

We denote by $\Omega$ the Casimir operator of $\g:=\sL(2,\R)$, the Lie algebra of $G$. Consider the  ‘$KAK$’ decomposition above for $g(t,\theta,\phi)\in G$. Then for all $f\in C_c^\infty(G)$ (see \cite[Chapter 6, Lemma 26]{Varadarajan89})
\begin{multline}
    \RA(\Omega)f = \Bigg[\frac{\partial^2}{\partial t^2}+ 2\frac{\cosh(2t)}{\sinh(2t)}\frac{\partial}{\partial t}\\
    -\frac1{4\cosh^2(t)}\left(\frac{\partial^2}{\partial \theta^2}+\frac{\partial^2}{\partial \phi^2}+2\frac{\partial^2}{\partial \theta\partial\phi}\right)
    +\frac1{\sinh^2(t)}\left(\frac{\partial^2}{\partial \theta^2}+\frac{\partial^2}{\partial \phi^2}-2\frac{\partial^2}{\partial \theta\partial\phi}\right) \Bigg]f~.
\end{multline}
which is the formula for the d'Alembertian $\square_\mathbb{X}=\partial_{x_1}^2+\partial_{x_2}^2-\partial_{x_3}^2-\partial_{x_4}^2$ on $\mathbb{X}$ in spherical coordinates (see \cite[page 70]{Andersen01}).

\section{Resonances of the Laplace operator on $\SL(2,\R)$}\label{section Resonances}

We recall that $G=\SL(2,\R)$ is isomorphic to $\SU(1,1)$ and locally isomorphic to $\wt H$. The resonances of the Laplace operator for the group $G$ are known. One can find them for the line bundles over the real hyperbolic plane, with the residue representations in \cite{Will}. The different method we will give here can be found for $G$ in the unpublished parts II.1.2 and II.2.2 in \cite{RobThesis21}. We use a direct method based on the original Harish-Chandra~-~Plancherel formula for $G$. The ideas are based on \cite{HilgPasq}.

Through the isomorphism given in section \ref{section Iso SL2R}, the resonances of the Laplacian of $G$ gives directly the resonances of the d'Alembertian $\square_\mathbb{X}$. The residue representations will be different than those of $G$, because the action is not the one of $G$ but that of $G\times G$, since we consider the action of $\wt G$ on $C_c^\infty(\mathbb{X})$. They will be given in section \ref{section Residue representations}.

Recall that irreducible unitary representations of $K:=\SO(2)$ are the characters $\chi_n :K \mapsto \C$ defined by
\begin{equation*}
    \chi_n(k_\theta)=e^{in\theta}  
\end{equation*}
with $\theta\in\R$ and $n\in\Z$.
Let $A$ and $N$ be defined as follows
\begin{equation*}
   A=\left\{a_t:=\left(
\begin{array}{ccc}
e^t & 0\\
0 & e^{-t}
\end{array}\right)~\Big|~ t\in\R \right\}\,,\ \  
N=\left\{\left(
\begin{array}{ccc}
1 & y\\
0 & 1
\end{array}\right)~\Big|~ y\in\R\right\}\,.
\end{equation*}
with Lie algebras
\begin{equation*}
   \a=\left\{\left(
\begin{array}{ccc}
t & 0\\
0 & -t
\end{array}\right)~\Big|~ t\in \R\right\}\,,\ \  
\n=\left\{\left(
\begin{array}{ccc}
0 & y\\
0 & 0
\end{array}\right)~\Big|~ y\in\R\right\}\,.
\end{equation*}
The system of (restricted) roots of the pair $(\g,\a)$ is $\{\pm\alpha\}$ where $\alpha\begin{pmatrix}
t & 0\\
0 & -t
\end{pmatrix} =2t$. We choose $\{\alpha\}$ as positive system such that the half sum of positive roots is $\rho := \frac{1}{2}\alpha$. We define $e^{\lambda\alpha}(a_t) := e^{2\lambda t}$, for all $\lambda \in \C$. 
The centralizer of $A$ in $K$ is $M = \{\pm I_2\}$. We denote by $0$ is the trivial representation and 
$1$ is the non-trivial representation of $M$. By $(\pi^\delta_{i\lambda},\Hi^\delta_{i\lambda})$ we mean the principal series representation \mbox{$\Ind_{MAN}^{G}(\delta\otimes e^{i\lambda\rho} \otimes \triv_N)$}, where $\lambda \in \C$ and $\delta\in\hat M=\{0,1\}$. The space $\Hi_{i\lambda}^{\delta}$ is the Hilbert space completion of the space of smooth functions $f: G\rightarrow \C$ such that
\begin{equation*}
    f(manx)=a^{(1+i\lambda)\rho}\delta(m) f(x) = t^{1+i\lambda}\delta(m) f(x) \,,
\end{equation*}
where $m\in M$, $a = \begin{pmatrix}
t & 0\\
0 & t^{-1}
\end{pmatrix}\in A$, $n\in N$, $x\in {G}$. The representation $\pi^\delta_{i\lambda}$ is defined, for all $g\in G$ and $f\in \Hi^\delta$, by 
$\pi^\delta_{i\lambda}(g)f = \RA(g)f$. 

Let \begin{equation}\label{eq delta(l)}
    \delta(l)=\left\{\begin{array}{cc}
    0 & \text{ if } l \text{ is odd} \\
    1 & \text{ if } l \text{ is even}
\end{array}\right..
\end{equation}

\begin{Prop}[\cite{LangSL2R}, VI. \S 6, Theorem 8, pages 119-124]\label{Prop representations of SL2}
If $i\lambda$ is not an integer, then
\begin{equation*}
    {\Hi^0_{i\lambda}}|_K=\bigoplus_{n\in 2\Z}{\Ind}_K^{G}(\chi_n)~~~~~~~~ \text{and}~~~~~~~~ \Hi^{1}_{i\lambda}|_K=\bigoplus_{n\in 2\Z+1}{\Ind}_K^{G}(\chi_n)~.
\end{equation*}

If $\lambda=0$, then
\begin{equation*}
    \Hi^0_{0}=\bigoplus_{n\in 2\Z}{\Ind}_K^{G}(\chi_n) ~~~~~~~~ \text{and}~~~~~~~~ \Hi^1_{0}=\bigoplus_{\substack{n\in 2\Z+1\\n\geq1}}{\Ind}_K^{G}(\chi_n) ~~\oplus~~ \bigoplus_{\substack{n\in 2\Z+1\\n\leq-1}}{\Ind}_K^{G}(\chi_n)\,.
\end{equation*}

If $m\geq 2$ is an integer and $i\lambda=m-1$, then $\Hi^{\delta(m-1)}_{m-1}$ contains two irreducible submodules,
\begin{equation*}
    D^m=\bigoplus_{\substack{m\leq n\\ n-m\in 2\Z}}{\Ind}_K^{G}(\chi_n)\,,~~~~~~
    D^{-m}=\bigoplus_{\substack{-m\geq n\\ n-m\in 2\Z}}{\Ind}_K^{G}(\chi_n)\,.
\end{equation*}
The quotient module ($\Hi^{\delta(m-1)}_{m-1}$ divided by the two submodules) is irreducible, finite dimensional of dimension $m-1$.

The module $\Hi^{\delta(m)}_{m-1}$ is irreducible and corresponds to
$$\bigoplus_{ n-m\in 2\Z+1}{\Ind}_K^{G}(\chi_n)\,$$

If $m\geq 2$ is an integer and $i\lambda=-m+1$, then $\Hi^{\delta(m-1)}_{m-1}$ contains the finite dimensional submodule
\[
\C e_{-m+2}\oplus \C e_{-m+4}\oplus ... \oplus \C e_{m-2}\,. 
\]
where $e_n$ is the function on ${G}$ defined by 
\begin{equation}\label{e_n}
    e_n\left(\left(
\begin{array}{ccc}
u & 0\\
0 & u
\end{array}
\right)
\left(
\begin{array}{ccc}
y & x\\
0 & 1
\end{array}
\right)
\left(
\begin{array}{ccc}
\cos \theta & \sin\theta\\
-\sin\theta & \cos \theta
\end{array}
\right)\right)
=y^{1+i\lambda}e^{in\theta}\,.
\end{equation}
Here $u, y$ and $x$ are real such that the indicated product is in ${G}$. 
Thus we have the highest weight modules, lowest weight modules, finite dimensional modules and modules with unbounded $K$-types on both sides.

The $(\g,K)$-modules $\Hi^\delta_{m-1}$ and $\Hi^\delta_{-m+1}$ are dual to each other.

Here is the complete list of the irreducible unitarizable $(\g,K)$-modules.
\begin{enumerate}
\item Lowest weight module $D^m$ with lowest weight $m\geq 1$ and the highest weight module $D^m$ with highest weight $m\leq -1$\;
\item Principal series $\Hi^0_{i\lambda}$ and $\Hi^{1}_{i\lambda}$, $\lambda\in\R\setminus \{0\}$;
\item Principal series $\Hi^0_{0}$;
\item Complementary series $\Hi^0_{s}$, $0<s<1$;
\item Trivial representation.
\end{enumerate}
\end{Prop}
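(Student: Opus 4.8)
This is a classical result; \cite{LangSL2R} proves it by passing to the action of $\g_{\C}$ on the $K$-finite vectors, and that is the route I would take. The first step is to record the $K$-type picture: by Frobenius reciprocity — equivalently, by restricting a section on $G=KAN$ to $K$ and expanding in the characters $\chi_n$ — the $K$-finite vectors of $\Hi^\delta_{i\lambda}$ have basis $\{e_n : n\in 2\Z+\delta\}$ with each $\chi_n$ occurring exactly once. This already yields the two displayed decompositions when $i\lambda\notin\Z$, together with the $\lambda=0$ statement up to the reducibility of $\Hi^1_0$, which will drop out of the reducibility analysis below.

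Next I would complexify $\g$ and fix the $\sL(2,\C)$-triple $(W,E^+,E^-)$ adapted to $\Ak$, so that $W e_n = in\,e_n$, $[W,E^\pm]=\pm 2i\,E^\pm$, and $[E^+,E^-]$ is a fixed multiple of $W$. A short computation in the induced model shows that the coefficient of $e_{n\pm2}$ in $E^\pm e_n$ is a nonzero scalar times $n\pm(1+i\lambda)$. From here the whole reducibility analysis is the bookkeeping of the zeros of these affine-in-$n$ coefficients: a proper submodule appears exactly when $1+i\lambda$ or $-(1+i\lambda)$ lands in $2\Z+\delta$, i.e.\ when $i\lambda\in\Z$ with the matching parity — which is precisely the split between $\delta(m-1)$ and $\delta(m)$ — and off the integers $\Hi^0_{i\lambda},\Hi^1_{i\lambda}$ are irreducible.

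Then I would specialize. For $i\lambda=m-1$, $m\ge 2$, $\delta=\delta(m-1)$: the $E^-$-coefficient vanishes at $n=m$ and the $E^+$-coefficient at $n=-m$, so $\bigoplus_{n\ge m}\C e_n$ and $\bigoplus_{n\le -m}\C e_n$ are the submodules $D^m$ and $D^{-m}$, and the quotient is spanned by the images of $e_{-m+2},\dots,e_{m-2}$, hence has dimension $m-1$ and is irreducible because no coefficient vanishes strictly inside that window. For $\delta=\delta(m)$ no coefficient vanishes, so $\Hi^{\delta(m)}_{m-1}$ is irreducible. The point $i\lambda=-m+1$ is identical, with zeros now at $n=m-2$ and $n=-m+2$, giving the finite-dimensional submodule $\C e_{-m+2}\oplus\cdots\oplus\C e_{m-2}$; and $\Hi^\delta_{m-1}\cong(\Hi^\delta_{-m+1})^\ast$ is just $(\pi^\delta_{i\lambda})^\ast\cong\pi^\delta_{-i\lambda}$, visible from the $K$-invariant pairing $(f,g)\mapsto\int_K f\bar g$.

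Finally, for the list of unitarizable $(\g,K)$-modules I would put the invariant Hermitian form on $\Hi^\delta_{i\lambda}$, compute $\|e_n\|^2$ recursively from the adjointness relation between $E^+$ and $E^-$, and test positivity; the positive-definite cases are the tempered principal series $\Hi^0_{i\lambda},\Hi^1_{i\lambda}$ with $\lambda\in\R\setminus\{0\}$ and $\Hi^0_0$, the complementary range $i\lambda=s\in(0,1)$ with $\delta=0$, and the pieces $D^{\pm m}$; $\Hi^1_0$ splits as the sum of the two limits of discrete series $D^{\pm1}$, and among the finite-dimensional subquotients only the trivial one carries a definite form. The routine parts are the $K$-type count and the structure constants; the delicate step — and the one to which \cite{LangSL2R} devotes the real work — is this last one, pinning down exactly the interval $0<s<1$ on which the complementary-series form stays positive while ruling out the remaining finite-dimensional subquotients.
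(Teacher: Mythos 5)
Your proposal is correct and follows essentially the same route as the paper's source: the paper gives no proof of this proposition but cites Lang's book, whose argument is exactly your passage to $K$-finite vectors, the ladder-operator coefficients $n\pm(1+i\lambda)$ (which the paper itself records later in Section 4), and the positivity analysis of the invariant Hermitian form for the unitarizable list. No gaps worth flagging beyond the usual care needed in the complementary-series positivity range, which you already acknowledge.
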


We denote by $\Theta^\delta_{i\lambda}$ the character of the representation of $\pi^\delta_{i\lambda}$.
We also define the convolution product \begin{equation*}
    \left(\Theta^\delta_{i\lambda}\ast f \right)(g) = \int_{G}\Theta^\delta_{i\lambda}(x)f(gx) ~dx~.
    \end{equation*}
The Harish-Chandra's Plancherel theorem for $G$ may be stated as follows:
\begin{Thm}[Inversion formula for $G$]\label{Plancherel formula for SL2}
For any $f\in C_c^\infty({G})$,
\begin{multline}\label{eq Plancherel formula for SL2}
2\pi f(g)=\sum_{n=1}^\infty n\Big(\Tr\big(D^{(n+1)}+D^{(-n-1)}\big)\ast f\Big)(g)\\
+\frac{1}{2}\int_0^\infty ~\left(\Theta^0_{i\lambda}\ast f\right)(g)~~\lambda \tanh\left(\frac{\pi\lambda}{2}\right)\,d\lambda\\
+ \frac{1}{2}\int_0^\infty \left(\Theta^1_{i\lambda}\ast f\right)(g)~~\lambda \coth\left(\frac{\pi\lambda}{2}\right)\,d\lambda~.
\end{multline}
\end{Thm}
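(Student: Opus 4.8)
\textit{Plan.} This is the classical Harish-Chandra--Plancherel (Fourier inversion) formula for $G=\SL(2,\R)$, so one may simply quote it, e.g. from \cite{LangSL2R}; here is how I would organize a self-contained proof. First I would reduce to $g=e$: since $\big(\Theta\ast\LA(g^{-1})f\big)(e)=(\Theta\ast f)(g)$ for every invariant distribution $\Theta$ and $\big(\LA(g^{-1})f\big)(e)=f(g)$, it suffices to prove
$$2\pi f(e)=\sum_{n\ge 1}n\,\big(\Theta_{D^{(n+1)}}+\Theta_{D^{(-n-1)}}\big)(f)+\tfrac12\int_0^\infty\!\Theta^0_{i\lambda}(f)\,\lambda\tanh\!\big(\tfrac{\pi\lambda}{2}\big)\,d\lambda+\tfrac12\int_0^\infty\!\Theta^1_{i\lambda}(f)\,\lambda\coth\!\big(\tfrac{\pi\lambda}{2}\big)\,d\lambda,$$
with $\Theta_{D^{(m)}}:=\Tr(D^{(m)})$ and $\Theta(f):=\int_G\Theta(x)f(x)\,dx$. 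Both sides are $\Ad(G)$-invariant distributions in $f$, so I may assume $f$ is $\Ad(G)$-invariant, i.e. work entirely with orbital integrals.

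The group $G$ has exactly two conjugacy classes of Cartan subgroups --- the elliptic $B=K=\SO(2)$ and the split $A_{\mathrm s}=MA$ --- and for $f\in C_c^\infty(G)$ I would form the normalized orbital integrals
$$\Phi^B_f(\theta)=\big|e^{i\theta}-e^{-i\theta}\big|\!\int_{G/B}\!f(xk_\theta x^{-1})\,d\dot x\,,\qquad \Phi^A_f(t)=\big|e^{t}-e^{-t}\big|\!\int_{G/A_{\mathrm s}}\!f(xa_tx^{-1})\,d\dot x\,,$$
which are smooth off the identity and obey the Weyl integration formula $\int_G f=c_B\!\int\!\Phi^B_f(\theta)\,|e^{i\theta}-e^{-i\theta}|\,d\theta+c_A\!\int\!\Phi^A_f(t)\,|e^{t}-e^{-t}|\,dt$ for explicit constants $c_B,c_A$. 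Harish-Chandra's jump relations at $e$ recover $f(e)$ from the behaviour of $\Phi^B_f$ near $\theta=0$ (a fixed multiple of the jump of its first $\theta$-derivative there); with Haar measure normalized as in the statement, this is the source of the left-hand side $2\pi f(e)$.

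Next I would substitute the Bargmann--Harish-Chandra character formulas on the regular sets: $\Theta^\delta_{i\lambda}$ vanishes on $B^{\mathrm{reg}}$ and, on $A^{\mathrm{reg}}$, equals $\big(e^{i\lambda t}+(-1)^\delta e^{-i\lambda t}\big)\big/|e^{t}-e^{-t}|$, while $\Theta_{D^{(\pm m)}}$ is an explicit ratio of exponentials over the Weyl denominator on $B^{\mathrm{reg}}$ and decays like $e^{-(m-1)|t|}/|e^{t}-e^{-t}|$ on $A^{\mathrm{reg}}$. Feeding these into $\Theta(f)=\int_G\Theta(x)f(x)\,dx$ via the Weyl integration formula, $\Theta^\delta_{i\lambda}(f)$ becomes (a constant times) the even, resp. odd, part of the one-variable Fourier transform $\widehat{\Phi^A_f}(\lambda)$, and $\big(\Theta_{D^{(m)}}+\Theta_{D^{(-m)}}\big)(f)$ becomes the $m$-th Fourier coefficient of $\Phi^B_f$ plus an exponentially small tail from $\Phi^A_f$. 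The right-hand side is then a completely explicit expression in $\Phi^A_f$, $\Phi^B_f$ and the weights $\lambda\tanh\!\big(\tfrac{\pi\lambda}{2}\big),\,\lambda\coth\!\big(\tfrac{\pi\lambda}{2}\big)$; Fourier inversion on $\R$ for the $\Phi^A_f$-part, Fourier series on the circle for the $\Phi^B_f$-part, and the Mittag--Leffler (partial-fraction) expansions of $\tanh$ and $\coth$ collapse it --- via Harish-Chandra's limit formula at $e$ --- to $2\pi f(e)$. Conceptually, the discrete-series terms are exactly the residues at $i\lambda=m-1$ $(m\ge2)$ of the meromorphically continued principal-series Plancherel density (the $L^2$-discrete spectrum of $\Omega$), which is the same residue mechanism that produces the residue representations of Section \ref{section Residue representations}.

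The hard part will be the exact Plancherel data: identifying $|\mathbf{c}^{0}(\lambda)|^{-2}$ and $|\mathbf{c}^{1}(\lambda)|^{-2}$ as proportional to $\lambda\tanh\!\big(\tfrac{\pi\lambda}{2}\big)$ and $\lambda\coth\!\big(\tfrac{\pi\lambda}{2}\big)$ with the correct constants, showing the formal degree of $D^{(\pm m)}$ is $m-1$, and pinning down the precise jump relations for $\Phi^B_f,\Phi^A_f$ at $e$ --- keeping all normalizations consistent is exactly what fixes the overall factor $2\pi$ and explains why the limits of discrete series $D^{(\pm1)}$ (formal degree $0$) do not appear. One also needs Schwartz-type bounds on the orbital integrals and their Fourier transforms to justify convergence of the discrete sum and the manipulations above. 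Granting those, the remainder is the Fourier-analytic bookkeeping just sketched.
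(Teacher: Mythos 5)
The paper does not prove this statement at all: it is quoted as the classical Harish-Chandra Plancherel/inversion formula for $\SL(2,\R)$ (the surrounding material is taken from \cite{LangSL2R}, VIII, and Harish-Chandra), so your first option --- simply citing the literature --- is exactly what the paper does, and on that level you match it. Your additional self-contained sketch is the standard orbital-integral proof (reduction to $g=e$, Weyl integration over the two Cartan subgroups, the Bargmann--Harish-Chandra character formulas, jump relations for $\Phi^B_f$ at the identity, Fourier inversion plus the partial-fraction expansions of $\tanh$ and $\coth$, formal degree $m-1$ for $D^{(\pm m)}$ and the vanishing contribution of the limits of discrete series $D^{(\pm 1)}$), and it is sound in outline; it buys a conceptual explanation of where the densities $\lambda\tanh(\pi\lambda/2)$, $\lambda\coth(\pi\lambda/2)$ and the discrete sum come from, which the paper's bare citation does not. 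Two caveats: the character of $\pi^\delta_{i\lambda}$ on the split Cartan is $\delta(m)\bigl(e^{i\lambda t}+e^{-i\lambda t}\bigr)/\bigl|e^{t}-e^{-t}\bigr|$ at $m\,a_t$ with $m\in\{\pm I\}$, i.e.\ the sign $(-1)^\delta$ distinguishes the two components of the Cartan rather than the two exponentials; and, as you yourself note, the normalization constants (the factor $2\pi$, the jump constant, the formal degrees) and the Schwartz-type estimates justifying the contour/series manipulations are precisely the nontrivial bookkeeping, so as written your argument is a correct plan rather than a complete proof --- which is acceptable here only because the result is classical and citable.
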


\begin{Rem}
\begin{leftbar}
As an operator acting on $L^2$, the Laplacian of $\SL(2,\R)$ (or equivalently, the d’Alembertian) has a discrete and a continuous spectrum. The discrete part consists of eigenvalues, which are poles of the resolvent of the Laplacian as operator on $L^2$. On the other hand, the resonances originate from the continuous part of the spectrum and this is the reason why we shall determine them by first projecting the operator onto its continuous spectrum. This seems to be consistent with the scattering interpretation of resonances. 
In the physics literature, the physical states of a system show up as bound states or as resonance states. The bound states correspond to the eigenvalues of the Hamiltonian of the system as operator acting on $L^2$. Suppose, as in our case, that the Hamiltonian admits a meromorphic extension when it is restricted to smooth compactly supported functions. The extension is usually defined on a Riemann surface above $\C$ (or a portion of $\C$). The original complex plane containing the $L^2$ spectrum— and hence the eigenvalues— is called the physical sheet. The resonances are the poles of the meromorphic extension that are located outside of the physical sheet. In many cases (and in our case too), the Riemann surface is a square root function with two sheets. The second one, called the unphysical sheet, is hence where the resonances can be found. A resonances might mirror an eigenvalue (in the sense that they are corresponding points in the two sheets). However as we shall see Theorem 2, none of the residue representations we determine is in $L^2$, as a resonant state should be. 
For further information on the physical interpretation of resonances, we refer to \cite{DyaZwo19}, \cite{His12}. See also \cite[Section 3]{Stro05} for the Riemannian symmetric space situation. 

Thus, from now on, we consider the projection of the resolvent on the continuous part of \eqref{eq Plancherel formula for SL2}. 

\end{leftbar}
\end{Rem}

A direct computation (see \cite[page 195]{LangSL2R}) shows that in the universal enveloping algebra $U(\g_c)$
\begin{equation*}
    \Omega = \begin{pmatrix}1&0\\0&1\end{pmatrix}^2 + \begin{pmatrix}0&1\\1&0\end{pmatrix}^2 - \begin{pmatrix}0&1\\-1&0\end{pmatrix}^2
\end{equation*} 
and 
\begin{equation*}
    \pi_{i\lambda}(\Omega) = -(\lambda^2 +1) \Id~.
\end{equation*}
We consider $\Omega$ acting by the left (or the right) on $C_c^\infty({G})$.
We have, for all $x\in \mathbb{X}$
\begin{equation}\label{eq iso Dal Casimir}
    \square_\mathbb{X} \left(\wideparen{\Theta^\delta_{i\lambda}\ast f}\right)(x) = \LA(\Omega)\left(\Theta^\delta_{i\lambda}\ast f\right)(g_x) = -(\lambda^2 +1) \left(\Theta^\delta_{i\lambda}\ast f\right)(g_x)~,
\end{equation}
where the isomorphism $f\mapsto \wideparen f$ has been defined in \eqref{eq iso C(G) to C(X)}.

Set \begin{equation}
    \Reso(z) := (-\LA(\Omega) - z)^{-1}~.
\end{equation}
Then, up to a constant, for $f\in C_c^\infty({G})$ and for all $z\in \C\setminus [1,+\infty[$ and $g \in {G}$
\begin{align*}
    \left(\Reso(z)f\right)(g) &= \int_0^\infty (\lambda^2+1-z)^{-1} ~\left(\Theta^0_{i\lambda}\ast f\right)(g)~~\lambda \tanh\left(\frac{\pi\lambda}{2}\right)\,d\lambda\\
&~~~+ \int_0^\infty (\lambda^2+1-z)^{-1}\left(\Theta^1_{i\lambda}\ast f\right)(g)~~\lambda \coth\left(\frac{\pi\lambda}{2}\right)\,d\lambda~.\\
\end{align*}
Choose the holomorphic branch of the square root function is such that $\sqrt{-1} = i$. Changing the variable $\zeta = \sqrt{z - 1}$, we get for $$\zeta\in\C^+:= \left\{z\in\C~|~\Im(z) >0\right\}~,$$ up to a constant,
\begin{align}\label{eq Resolvent for SL2 tanh}
    \left(\Reso(\zeta^2+1)f\right)(g) &=\int_0^\infty (\lambda^2-\zeta^2)^{-1} ~\left(\Theta^0_{i\lambda}\ast f\right)(g)~~ \lambda\tanh\left(\frac{\pi\lambda}{2}\right)\,d\lambda\\\label{eq Resolvent for SL2 coth}
&~~~+ \int_0^\infty (\lambda^2-\zeta^2)^{-1} \left(\Theta^1_{i\lambda}\ast f\right)(g)~~ \lambda\coth\left(\frac{\pi\lambda}{2}\right)\,d\lambda~,
\end{align}
We will deal with each integral \eqref{eq Resolvent for SL2 tanh} and \eqref{eq Resolvent for SL2 coth} separatly. For the first integral \eqref{eq Resolvent for SL2 tanh} we split it as follows:
\begin{align*}
    \int_0^\infty (\lambda^2-\zeta^2)^{-1} ~\left(\Theta^0_{i\lambda}\ast f\right)(g)~~ \lambda\tanh\left(\frac{\pi\lambda}{2}\right)\,d\lambda&=\frac12\int_0^\infty \left(\frac{1}{\lambda -\zeta} +\frac{1}{\lambda +\zeta}\right) ~\left(\Theta^0_{i\lambda}\ast f\right)(g)~~ \tanh\left(\frac{\pi\lambda}{2}\right)\,d\lambda\\ &= \frac12\int_\R \frac{1}{\lambda -\zeta} ~\left(\Theta^0_{i\lambda}\ast f\right)(g)~~ \tanh\left(\frac{\pi\lambda}{2}\right)\,d\lambda
\end{align*}
By Morera's theorem, the resulting function is holomorphic in $\zeta \in \C\setminus \R$. We want to extend it meromorphically beyond the upper half-plane. 
For this, we shift the contour of integration in the direction of the negative imaginary axis and apply the residue theorem. This gives, for any $y\in \R\setminus\Z_+$
\begin{multline}
        \int_0^\infty (\lambda^2-\zeta^2)^{-1} ~\left(\Theta^0_{i\lambda}\ast f\right)(g)~~ \lambda\tanh\left(\frac{\pi\lambda}{2}\right)\,d\lambda = \\\frac12\int_{\R-iy} \frac{1}{\lambda -\zeta} ~\left(\Theta^0_{i\lambda}\ast f\right)(g)~~ \tanh\left(\frac{\pi\lambda}{2}\right)\,d\lambda\\
        - \pi i\sum_{\substack{l\in 2\Z_++1\\ l< y}}\frac{1}{-il -\zeta} ~\left(\Theta^{0}_{l}\ast f\right)(g)~~ \Res_{\lambda=-il}\tanh\left(\frac{\pi\lambda}{2}\right)~.
\end{multline}
The second term \eqref{eq Resolvent for SL2 coth} does not work the same way, because $\lambda\mapsto\coth\left(\frac{\pi\lambda}{2}\right)$ has a pole at $\lambda=0$. Thus we shift the contour first. For a fixed $\zeta\in \C^+$ et $y\in \R\setminus2\Z$, $y> \Im(\zeta)$, the residue theorem gives
\begin{multline*}
    \frac12\int_\R (\lambda^2-\zeta^2)^{-1} \left(\Theta^1_{i\lambda}\ast f\right)(g)~~ \lambda\coth\left(\frac{\pi\lambda}{2}\right)\,d\lambda=\frac12
    \int_{\R-yi} (\lambda^2-\zeta^2)^{-1} \left(\Theta^1_{i\lambda}\ast f\right)(g)~~ \lambda\coth\left(\frac{\pi\lambda}{2}\right)\,d\lambda\\
    -\pi i\sum_{\substack{l\in 2\Z^*_+\\ l< y}}\frac{-il}{-l^2 -\zeta^2} ~\left(\Theta^{1}_{l}\ast f\right)(g)~~ \Res_{\lambda=-il}\coth\left(\frac{\pi\lambda}{2}\right)\\
    -\frac{\pi i}2 \left(\Theta^1_{i\zeta}\ast f\right)(g)~~ \Res_{\lambda=-\zeta}\left[\frac1{\lambda+\zeta}\coth\left(\frac{\pi\lambda}{2}\right)\right]~.
\end{multline*}
Now we can split the first term as before
\begin{align*}
    \int_{\R-yi} (\lambda^2-\zeta^2)^{-1} \left(\Theta^1_{i\lambda}\ast f\right)(g) &\lambda\coth\left(\frac{\pi\lambda}{2}\right)\,d\lambda\\&=\frac12\int_{\R-yi} \left(\frac1{\lambda-\zeta}+\frac1{\lambda+\zeta}\right) \left(\Theta^1_{i\lambda}\ast f\right)(g) \coth\left(\frac{\pi\lambda}{2}\right)\,d\lambda
    \\&=\frac12\int_{\R-yi} \frac1{\lambda-\zeta} \left(\Theta^1_{i\lambda}\ast f\right)(g) \coth\left(\frac{\pi\lambda}{2}\right)\,d\lambda\\ & + \frac12 \int_{\R-yi} \frac1{\lambda+\zeta} \left(\Theta^1_{i\lambda}\ast f\right)(g) \coth\left(\frac{\pi\lambda}{2}\right)\,d\lambda
\end{align*}
In the second integral, we change the variable to $-\lambda$
\begin{equation*}
\int_{\R-yi} \frac1{\lambda+\zeta} \left(\Theta^1_{i\lambda}\ast f\right)(g) \coth\left(\frac{\pi\lambda}{2}\right)\,d\lambda=\int_{\R+yi} \frac1{\lambda-\zeta} \left(\Theta^1_{i\lambda}\ast f\right)(g) \coth\left(\frac{\pi\lambda}{2}\right)\,d\lambda~,
\end{equation*}
and we shift again to $\R-yi$ passing through the singularities $\lambda=\zeta$ and the ones of $\lambda\mapsto \coth\left(\frac{\pi\lambda}{2}\right)$
\begin{multline*}
    \frac12\int_{\R+yi} \frac1{\lambda-\zeta} \left(\Theta^1_{i\lambda}\ast f\right)(g) \coth\left(\frac{\pi\lambda}{2}\right)\,d\lambda=\frac12\int_{\R-yi} \frac1{\lambda-\zeta} \left(\Theta^1_{i\lambda}\ast f\right)(g) \coth\left(\frac{\pi\lambda}{2}\right)\,d\lambda\\
    -\pi i\sum_{\substack{l\in 2\Z\\ |l|< y}}\frac{1}{il -\zeta} ~\left(\Theta^{1}_{-l}\ast f\right)(g)~~ \Res_{\lambda=il}\coth\left(\frac{\pi\lambda}{2}\right)\\
    -\pi i \left(\Theta^1_{i\zeta}\ast f\right)(g)~~ \Res_{\lambda=\zeta}\left[\frac1{\lambda-\zeta}\coth\left(\frac{\pi\lambda}{2}\right)\right]
\end{multline*}
Summing everything in the original formula of \eqref{eq Resolvent for SL2 coth}, several residues are canceled and we get
\begin{multline*}
    \int_\R (\lambda^2-\zeta^2)^{-1} \left(\Theta^1_{i\lambda}\ast f\right)(g)~~ \lambda\coth\left(\frac{\pi\lambda}{2}\right)\,d\lambda=\int_{\R-yi} \frac1{\lambda-\zeta} \left(\Theta^1_{i\lambda}\ast f\right)(g)~~ \coth\left(\frac{\pi\lambda}{2}\right)\,d\lambda\\
    +2i\sum_{\substack{l\in 2\Z_+^*\\ l< y}}\frac{1}{il +\zeta} ~\left(\Theta^{1}_{l}\ast f\right)(g)+\frac{i}{\zeta}\left(\Theta^{1}_{0}\ast f\right)(g)
\end{multline*}

The following proposition is obtained putting \eqref{eq Resolvent for SL2 tanh} and \eqref{eq Resolvent for SL2 coth} together. 

\begin{Prop}\label{Prop Resonances SL2R}
\begin{multline}\label{meromorphiccontinuation}
    \left(\Reso(\zeta^2+1)f\right)(g) = \frac12\int_{\R-iy} \frac{1}{\lambda -\zeta} ~\left(\Theta^0_{i\lambda}\ast f\right)(g)~~ \tanh\left(\frac{\pi\lambda}{2}\right)\,d\lambda\\
 +\frac12\int_{\R-iy} \frac{1}{\lambda -\zeta} ~\left(\Theta^1_{i\lambda}\ast f\right)(g)~~ \coth\left(\frac{\pi\lambda}{2}\right)\,d\lambda\\
    +\frac{i}{\zeta}\left(\Theta^{1}_{0}\ast f\right)(g)+ 2i\sum_{\substack{l\in \Z^*_+\\ l< y}}\frac{1}{il +\zeta} ~\left(\Theta^{\delta(l)}_{l}\ast f\right)(g) ~,
\end{multline}
where $\delta(\cdot)$ is defined in \eqref{eq delta(l)}.
The above formula give a meromorphic continuation of the resolvent of the positive Laplace operator on the half-plane above $\R-iy$ with simple poles at $\zeta = -il$ where $l\in \Z_+,~ l< y$. 
We make $y\rightarrow +\infty$. Let $S=\{(z,\zeta) \in \C^2~|~z = \zeta^2+1\}$. Then the resolvent extends meromorphically from $S^+ = \{(z,\zeta) \in M~|~\Im(\zeta)>0\}$ to $S$ and with simple poles at $(-l+1,-il)$, $l\in \Z_+$. These poles are the resonances of the Laplace operator.
\end{Prop}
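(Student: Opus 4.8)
The plan is to assemble \eqref{meromorphiccontinuation} out of the two contour-deformation identities obtained just above — the one for the $\tanh$-part \eqref{eq Resolvent for SL2 tanh} and the one for the $\coth$-part \eqref{eq Resolvent for SL2 coth} — and then to analyse the resulting expression in the variable $\zeta$. Adding the two identities (valid, as derived, for $\zeta\in\C^+$ with $\Im(\zeta)<y$, hence on all of $\C^+$ by the identity theorem since both sides are holomorphic there), the formula \eqref{meromorphiccontinuation} is pure bookkeeping of the residues collected along the way: the auxiliary residue at $\lambda=-\zeta$ produced in the $\coth$-computation cancels between the two shifts performed there; the residues of $\tanh(\tfrac{\pi\lambda}{2})$ at the points $\lambda=-il$ with $l$ odd and of $\coth(\tfrac{\pi\lambda}{2})$ at the points $\lambda=-il$ with $l$ even combine into the single sum over $l\in\Z_+^*$, the $M$-type $\delta(l)$ of \eqref{eq delta(l)} recording which series contributes ($\delta(l)=0$ at odd $l$, coming from $\tanh$; $\delta(l)=1$ at even $l$, coming from $\coth$); and the simple pole of $\coth(\tfrac{\pi\lambda}{2})$ at $\lambda=0$ yields the term $\frac{i}{\zeta}(\Theta^1_0\ast f)(g)$. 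It then remains to prove: (i) for each fixed $g\in G$, $f\in C_c^\infty(G)$ and admissible $y$, each term on the right of \eqref{meromorphiccontinuation} is holomorphic, resp.\ meromorphic with the asserted simple poles, in $\zeta$ on $\{\Im(\zeta)>-y\}$; (ii) the right-hand side is independent of $y$ on the overlap of two such half-planes, so letting $y\to+\infty$ produces a single meromorphic continuation; and (iii) the poles are exactly $\zeta\in\{0\}\cup\{-il:l\in\Z_+^*\}$, equivalently the points $(\zeta^2+1,\zeta)$ of $S$ listed in the statement, and none of them is removable.

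For (i), the essential input is that, for fixed $g$ and $f\in C_c^\infty(G)$, the function $\lambda\mapsto(\Theta^\delta_{i\lambda}\ast f)(g)$ extends from $\R$ to an \emph{entire} function of $\lambda$ that is \emph{rapidly decreasing on every horizontal strip} $\{|\Im(\lambda)|\le Y\}$ — a Paley--Wiener/Schwartz-type estimate for the Harish-Chandra transform of a compactly supported smooth function, which is exactly what legitimated the contour shifts above and is carried out in \cite{RobThesis21}. Granting this, $\lambda\mapsto\frac{1}{\lambda-\zeta}(\Theta^\delta_{i\lambda}\ast f)(g)\tanh(\tfrac{\pi\lambda}{2})$ and its $\coth$-analogue are meromorphic with poles only at $\lambda=\zeta$ and at the zeros of $\cosh$ resp.\ $\sinh$; their restriction to the line $\R-iy$ is absolutely integrable and, by Morera's theorem, defines a holomorphic function of $\zeta$ on $\{\Im(\zeta)>-y\}$ — provided $y$ avoids the odd integers (so $\tanh$ has no pole on $\R-iy$), resp.\ the even integers (so $\coth$ has none), i.e.\ provided $y\in\R\setminus\Z$. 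The remaining two terms $\frac{i}{\zeta}(\Theta^1_0\ast f)(g)$ and $2i\sum_{l\in\Z_+^*,\,l<y}\frac{1}{il+\zeta}(\Theta^{\delta(l)}_l\ast f)(g)$ are manifestly meromorphic in $\zeta$ with simple poles at $\zeta=0$ and $\zeta=-il$; their residues are the distributions $f\mapsto i(\Theta^1_0\ast f)(g)$ and $f\mapsto 2i(\Theta^{\delta(l)}_l\ast f)(g)$, which are not identically zero because a principal-series character is not identically zero, so these poles are genuine, and the integral terms contribute no further poles in $\{\Im(\zeta)>-y\}$.

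For (ii), take $y_1<y_2$, both in $\R\setminus\Z$. On the common half-plane $\{\Im(\zeta)>-y_1\}$, the $y_2$-version of \eqref{meromorphiccontinuation} differs from the $y_1$-version only in that the integration lines $\R-iy_1$ are replaced by $\R-iy_2$ and the finite sum is enlarged by the indices $y_1<l<y_2$. Shifting each contour back from $\R-iy_2$ to $\R-iy_1$ and collecting, by the residue theorem, precisely the poles of $\tanh(\tfrac{\pi\lambda}{2})$ resp.\ $\coth(\tfrac{\pi\lambda}{2})$ crossed in between — whose residue contributions are exactly those extra summands — shows the two expressions coincide. Hence the $y$-indexed family patches to a single meromorphic function of $\zeta$ on all of $\C$, i.e.\ a meromorphic function on the curve $S$ (globally parametrised by $\zeta$, and connected, so the continuation is unique), which on $S^+=\{(z,\zeta)\in S:\Im(\zeta)>0\}$ reproduces the resolvent $\Reso(z)=(-\LA(\Omega)-z)^{-1}$ of the positive Laplacian, restricted to its continuous spectrum as in the preceding Remark. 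Its poles are the simple poles found in (i), located at $\zeta=-il$, $l\in\Z_+$, i.e.\ at the points of $S$ claimed; since the scattering resonances are by definition the poles of the meromorphic continuation lying off the physical sheet $S^+$, these are exactly the resonances.

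The only step that is not a formal manipulation is (i), and within it the combination of holomorphy of $\lambda\mapsto\Theta^\delta_{i\lambda}$ with the uniform rapid decay of $\lambda\mapsto(\Theta^\delta_{i\lambda}\ast f)(g)$ on horizontal strips; once this is available, everything else reduces to the residue theorem and the overlap argument of (ii). A minor but error-prone point is the residue bookkeeping when \eqref{eq Resolvent for SL2 tanh} and \eqref{eq Resolvent for SL2 coth} are added: the cancellation of the residues at $\lambda=\pm\zeta$ coming from the $\coth$-integral, the correct assignment of the $M$-type $\delta(l)$ at each integer $l$ (odd $l$ from $\tanh$, even $l$ from $\coth$), and the separate treatment of $\lambda=0$, which is a pole of $\coth$ but not of $\tanh$.
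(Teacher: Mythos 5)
Your proposal is correct and follows essentially the same route as the paper: the proposition is obtained exactly by adding the two contour-shift computations for the $\tanh$ and $\coth$ parts, tracking the cancellation of the residues at $\lambda=\pm\zeta$, and merging the residues at $\lambda=-il$ (odd $l$ from $\tanh$, even $l$ from $\coth$, plus the $\lambda=0$ pole of $\coth$ giving the $\frac{i}{\zeta}$ term). The only difference is that you spell out the ingredients the paper leaves implicit or delegates to \cite{RobThesis21} — the entire extension and rapid decay of $\lambda\mapsto(\Theta^\delta_{i\lambda}\ast f)(g)$ on horizontal strips, the independence of the formula from $y$ when patching as $y\to+\infty$, and the non-vanishing of the residues $\Theta^{\delta(l)}_l\ast f$ ensuring the poles are genuine — which is a welcome tightening rather than a different argument.
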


By the isomorphism in \eqref{eq iso Dal Casimir}, we have

\begin{Cor}\label{Prop Resonances Dal}
The resonances of $\square_\mathbb{X}$ on $C_c^\infty(\mathbb{X})$ are given by Proposition \ref{Prop Resonances SL2R}. 
\end{Cor}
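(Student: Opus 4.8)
The plan is to transfer the analysis from $G=\SL(2,\R)$ to $\mathbb{X}$ through the isomorphism $f\mapsto\wideparen f$ of \eqref{eq iso C(G) to C(X)}, which is the only mechanism at work in this corollary. First I would recall the two relevant facts from Section~\ref{section Not and Backg}: the map $f\mapsto\wideparen f$ is a topological isomorphism $C^\infty(G)\to C^\infty(\mathbb{X})$ restricting to an isomorphism $C_c^\infty(G)\to C_c^\infty(\mathbb{X})$, and, by the $KAK$-expression for $\RA(\Omega)=\LA(\Omega)$ displayed in Section~\ref{section Not and Backg} together with \eqref{eq iso Dal Casimir}, it intertwines $\LA(\Omega)$ on $C_c^\infty(G)$ with $\square_{\mathbb{X}}$ on $C_c^\infty(\mathbb{X})$. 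Consequently, as operators from $C_c^\infty$ to $C^\infty$, the family $\Reso(z)=(-\LA(\Omega)-z)^{-1}$ is carried by $f\mapsto\wideparen f$ onto $(-\square_{\mathbb{X}}-z)^{-1}$, with the same domain of holomorphy on the physical sheet; in particular a meromorphic continuation of one transfers to a meromorphic continuation of the other with the same poles.

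From here the argument is immediate. Given $f\in C_c^\infty(\mathbb{X})$, write $f=\wideparen h$ with $h\in C_c^\infty(G)$, apply the explicit formula \eqref{meromorphiccontinuation} to $h$, and compose with $f\mapsto\wideparen f$: this produces a map $\zeta\mapsto(-\square_{\mathbb{X}}-\zeta^2-1)^{-1}f$ holomorphic on $S^+$ and continuing meromorphically to all of $S$, with poles exactly at the points of Proposition~\ref{Prop Resonances SL2R}. Since $f\mapsto\wideparen f$ is independent of $\zeta$, the principal parts of this continuation are the images of the residue terms of \eqref{meromorphiccontinuation}, i.e.\ (up to constants) $\wideparen{\Theta^1_0\ast h}$ at $\zeta=0$ and $\wideparen{\Theta^{\delta(l)}_l\ast h}$ at $\zeta=-il$; as $\Theta^1_0$ and $\Theta^{\delta(l)}_l$ are nonzero characters, none of these residue operators vanishes on $C_c^\infty(\mathbb{X})$, so no pole is spurious and the resonances of $\square_{\mathbb{X}}$ on $C_c^\infty(\mathbb{X})$ are exactly those given in Proposition~\ref{Prop Resonances SL2R}.

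The one genuine point to address is that the continuation in Proposition~\ref{Prop Resonances SL2R} was obtained only after projecting away the discrete part of the Plancherel formula \eqref{eq Plancherel formula for SL2}, and I would check that no resonance is lost in doing so: the discrete series terms contribute, on the $\mathbb{X}$ side, only the $L^2$-eigenvalues of $\square_{\mathbb{X}}$, which are already poles of the resolvent on the physical sheet $S^+$ and hence, by definition, not resonances, while every pole reached by crossing the spectrum comes from the continuous part handled in Section~\ref{section Resonances}. What remains is pure bookkeeping of conventions---the Plancherel normalisation constant, the sign convention for the Laplacian, and the branch $\sqrt{-1}=i$ that pins down which sheet of $S$ is $S^+$---all of which are fixed in Section~\ref{section Resonances} and preserved verbatim by the isomorphism, so this is where I would expect to spend the most attention even though no real difficulty arises.
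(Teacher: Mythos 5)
Your proposal is correct and follows exactly the paper's route: the corollary is obtained by transferring the meromorphic continuation of Proposition \ref{Prop Resonances SL2R} through the isomorphism $f\mapsto\wideparen f$ and the intertwining relation \eqref{eq iso Dal Casimir}, which is all the paper itself invokes. The extra checks you add (non-vanishing of the residue operators, and that the discarded discrete part only contributes $L^2$-eigenvalues on the physical sheet) are sensible but not part of the paper's one-line argument.
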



\section{Residue representations}\label{section Residue representations}

Recall the two sided action of $G\times G$ on $C^\infty({G})$ given by \eqref{eq action of GxG on function}. 
The residues at $\lambda_l:=-il$, $l\in \Z_+$, in the meromorphic continuation \eqref{meromorphiccontinuation} span a $G\times G$-invariant subspace of $C^\infty(G)$. By the previous isomorphism, this is exactly the image of the ${G}\times {G}$-intertwining map:
\begin{equation}\label{residueoperatordefinition}
    \fonction{\Resi_l}{C_c^\infty(G)}{C^\infty(G)}{f}{\Theta^{\delta(l)}_{l}\ast f~,}
\end{equation}
where $\delta(\cdot)$ is defined in \eqref{eq delta(l)}. We denote this space by
\begin{equation}\label{residuerepresentationdefinition}
    \E_l := \{\Theta^{\delta(l)}_{l}\ast f ~|~ f \in  C_c^\infty(G)\}~.
\end{equation}

\subsection{$K\times K$-isotypic components}

By definition, the $K\times K$-isotypic components of $\E_l$ are the spaces of the functions $u\in C^\infty(G)$ such that for all $(k_1,k_2)\in K\times K$ and $g\in G$, 
\begin{equation}\label{eq K types}
    (k_1,k_2)\cdot u(g) = u(k_1^{-1}gk_2) =  \chi_n(k_1)\chi_m(k_2^{-1}) u(g)
\end{equation}
for some $n,m\in \Z$. They are called \emph{$(\chi_n,\chi_m)$-spherical functions} in the literature. We denote the set of such functions by $\mathcal{S}_{n,m}$. 

From now on, we use the compact picture of the principal series representations. It is obtained by restriction of the elements of $\Hi^\delta_{i\lambda}$ to $K$. Its representation space, which we denote by $\Hi^\delta$, is the Hilbert completion of: 
\begin{equation*}
    \{f: K \rightarrow \Hi_\delta ~|~ f(mk) = \delta(m)f(k) ~\text{ for all }k \in K,~ m\in M \}
\end{equation*}
with respect to $L^2$ inner product. It is independent of $\lambda\in \C$. Define 
the Iwasawa decomposition on $g\in G$ decomposition by:
\begin{equation}
    g=\IWn(g)e^{\IWH(g)}\IWk(g)~.
\end{equation}
Then the action of $\pi^\delta_{i\lambda}$ is given by:
\begin{equation*}
    \pi^\delta_{i\lambda}(g)f(k) := e^{(i\lambda+1)\rho(\IWH(kg))} f\big(\IWk\big(kg)\big)
\end{equation*} 
for all $g \in G$, $k \in K$ and $f\in \Hi^\delta$.
The representation $\pi^\delta_{i\lambda}$ is unitary for $\lambda \in \R$. We denote by $\langle\cdot,\cdot \rangle$ the inner product on this space.

\begin{Lemma}\label{Lemma KxK types}
    Let $f\in C^\infty_c(G)\cap \mathcal{S}_{n,m}$.
Then $\Theta^{\delta(l)}_{l}\ast f\equiv0$ if $n,m\not \in 2\Z+\delta(l)$. Moreover if $n,m\in 2\Z+\delta(l)$, then $\Theta^{\delta(l)}_{l}\ast f\in \mathcal{S}_{n,m}$.
\end{Lemma}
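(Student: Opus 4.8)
The statement is about how the residue operator $\Resi_l = \Theta^{\delta(l)}_l \ast (\,\cdot\,)$ interacts with the $K\times K$-bi-isotypic decomposition. I would start from the defining convolution
\[
\big(\Theta^{\delta(l)}_{l}\ast f\big)(g) = \int_G \Theta^{\delta(l)}_{l}(x)\, f(gx)\, dx,
\]
and exploit two facts: the character $\Theta^{\delta(l)}_{l}$ of the principal series $\pi^{\delta(l)}_{l}$ is a central (conjugation-invariant) distribution on $G$, and $\pi^{\delta(l)}_l$, viewed via Proposition \ref{Prop representations of SL2}, has only $K$-types $\chi_n$ with $n \in 2\Z + \delta(l)$ (this is exactly the parity statement: $\delta=0$ forces $n$ even, $\delta=1$ forces $n$ odd). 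The key is that convolution by a character acts on each $K$-isotypic component by a scalar, and kills $K$-types that do not appear in the representation.

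**The steps.** First, I would rewrite the convolution so the $K\times K$-equivariance becomes transparent. Since $\Theta^{\delta(l)}_l(x) = \Theta^{\delta(l)}_l(kxk^{-1})$ for all $k\in K$, averaging the variable change $x \mapsto k_1 x k_1^{-1}$ in the integral and then $x \mapsto k_2^{-1} x k_2$, one checks that if $f \in \mathcal{S}_{n,m}$ then $\Theta^{\delta(l)}_l \ast f$ again satisfies the $(\chi_n,\chi_m)$-equivariance relation \eqref{eq K types} (a Fubini/unimodularity computation using $f(k_1^{-1}gk_2) = \chi_n(k_1)\chi_m(k_2^{-1})f(g)$ and invariance of Haar measure), giving the last sentence: $\Theta^{\delta(l)}_l \ast f \in \mathcal{S}_{n,m}$ whenever it is nonzero. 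Second, for the vanishing: I would express $\Theta^{\delta(l)}_l \ast f$ in terms of the operator $\pi^{\delta(l)}_l(f) = \int_G f(x)\pi^{\delta(l)}_l(x)\,dx$ on $\Hi^{\delta(l)}$. Using $\Theta^{\delta(l)}_l(x) = \Tr \pi^{\delta(l)}_l(x)$ and writing the trace against the $K$-adapted orthonormal basis $\{e_j\}_{j \in 2\Z+\delta(l)}$ of $\Hi^{\delta(l)}$, the convolution $(\Theta^{\delta(l)}_l \ast f)(g)$ becomes (up to the right translation by $g$) a sum $\sum_j \langle \pi^{\delta(l)}_l(\check f)\, e_j, e_j\rangle$ of matrix coefficients, each of which is a $K$-finite function on $G$ whose left and right $K$-types lie in $2\Z + \delta(l)$. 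Hence, if $n$ or $m$ lies outside $2\Z+\delta(l)$, projecting onto the $(\chi_n,\chi_m)$-component of this space of matrix coefficients gives zero, so $\Theta^{\delta(l)}_l \ast f \equiv 0$.

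**The main obstacle.** The genuinely delicate point is the analytic justification that $\Theta^{\delta(l)}_l$, which is a \emph{distribution} (not a function) on $G$, can be paired with $f(g\,\cdot\,) \in C_c^\infty(G)$ and manipulated as above: one must know that $\Theta^{\delta(l)}_l$ is given by a locally integrable function (Harish-Chandra's regularity theorem for characters on $\SL(2,\R)$), so the integral in \eqref{residueoperatordefinition} is literally an absolutely convergent integral, and that the interchange of $\Tr$ with the integral over $G$ is legitimate — this is standard for $K$-finite $f$, or follows from writing $\pi^{\delta(l)}_l(f)$ as a trace-class (indeed finite-rank, after projecting to a $K$-type) operator. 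Once this is in place, everything reduces to the parity bookkeeping of $K$-types recorded in Proposition \ref{Prop representations of SL2}. I would present the bi-$K$-equivariance part first (it is a clean change-of-variables argument) and then deduce the vanishing from the $K$-type support of the matrix coefficients of $\pi^{\delta(l)}_l$.
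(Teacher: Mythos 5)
Your proposal is correct and takes essentially the same route as the paper: both rest on expanding $\Theta^{\delta(l)}_{l}$ as the sum of diagonal matrix coefficients $\langle\pi^{\delta(l)}_{l}(\cdot)\chi_j,\chi_j\rangle$ over $j\in 2\Z+\delta(l)$, using its conjugation-invariance via a change of variables over $K$, and concluding by orthogonality of the characters of $K$. The only differences are organizational (the paper runs the left- and right-$K$-type computations directly so that the orthogonality integrals appear explicitly, while you separate the equivariance step from the vanishing step), plus your remark on Harish-Chandra regularity, which the paper leaves implicit.
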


\begin{proof}
Recall that the adjoint of the principal series on $\Hi^\delta$ is given by\begin{equation*}
    \langle\pi^{\delta}_{{\lambda}}(x)\chi_i, \chi_j\rangle = \langle \chi_i, \pi^{\delta}_{-\overline{\lambda}}(x^{-1})\chi_j\rangle~,
\end{equation*}
for all $\delta\in \hat{M}$, $\lambda\in \C$, $x\in G$ and $i,j\in \Z$. 
    The lemma is shown by the following direct computations:
\begin{align*}
    \Theta^{\delta(l)}_{l}\ast f(g)
    =& \int_{K}\int_{G}\Theta^{\delta(l)}_{l}(x)f(kk^{-1}gx)dxdk\\
    =& \int_{K}\int_{G}\Theta^{\delta(l)}_{l}(g^{-1}kx)f(kx)dxdk\\
    =& \int_{K}\int_{G}\Theta^{\delta(l)}_{l}(kxg^{-1})f(kx)dxdk\\
    =& \int_{K}\int_{G}\sum_{j\in2\Z+\delta(l)}\langle\pi^{\delta(l)}_{l}(kxg^{-1})\chi_j, \chi_j\rangle f(kx)dxdk\\
    =& \int_{K}\int_{G}\sum_{j\in2\Z+\delta(l)}\langle\pi^{\delta(l)}_{l}(xg^{-1})\chi_j, \pi^{\delta(l)}_{-\overline{l}}(k^{-1})\chi_j\rangle f(kx)dxdk\\
    =& \int_{G}\sum_{j\in2\Z+\delta(l)}\langle\pi^{\delta(l)}_{l}(xg^{-1})\chi_j, \chi_j\rangle\left[\int_{K}\chi_n(k^{-1})\overline{\chi_j(k^{-1})}dk\right] f(x)dx\\
\end{align*}
and 
\begin{align*}
    \Theta^{\delta(l)}_{l}\ast f(g)=& \int_{G}\Theta^{\delta(l)}_{l}(x)\int_{K}f(gxk^{-1}k)dkdx\\
    =& \int_{G}\int_{K}\sum_{j\in2\Z+\delta(l)}\langle\pi^{\delta(l)}_{l}(xk)\chi_j, \chi_j\rangle f(gxk)dkdx\\
    =& \int_{G}\sum_{j\in2\Z+\delta(l)}\langle\pi^{\delta(l)}_{l}(x)\chi_j, \chi_j\rangle\left[\int_{K}\chi_m(k^{-1})\chi_j(k)dk\right] f(gx)dx~.\\
\end{align*}
Thus if $n,m\not\in 2\Z+\delta(l)$, the convolution is zero.
Since $$\Theta^{\delta(l)}_{l}\ast f(g) = \int_{G}\Theta^{\delta(l)}_{l}(x)f(gx)dx = \int_{G}\Theta^{\delta(l)}_{l}(x)f(xg)dx~, $$
we see that $\Theta^{\delta(l)}_{l}\ast f\in \mathcal{S}_{n,m}$
\end{proof}

\subsection{$G\times G$ action}

Now we have to know how $G\times G$ acts on $(\chi_n,\chi_m)$-spherical functions.

\begin{Rem}[Method by direct computations]
\begin{leftbar}
Let $\g_\C$ be the complexification of $\g$ and denote by $\mathcal{U}(\g_\C)$ its universal enveloping algebra.
We recall the action of $\g$ by the principal series representations on the functions $\chi_n\in \Hi^{\delta}$. We choose the following basis of $\g_\C$. 
Let $$E=\begin{pmatrix}1&i\\i&-1\end{pmatrix}~~,~~
               F=\begin{pmatrix}-1&i\\i&1\end{pmatrix}~~,~~
               D=\begin{pmatrix}0&1\\1&0\end{pmatrix}\in \Ak~.$$
One can show that the infinitesimal action of $\pi_l^{\delta(l)}$ is given by 
\begin{equation}
    \begin{array}{cl}\displaystyle
      \pi_l^\delta(E)\chi_n &= (n+l+1)\chi_{n+2}~,\\
    \pi_l^\delta(F)\chi_n&= (n-l-1)\chi_{n-2}~,\\
    \pi_l^\delta(D)\chi_n &= in\chi_n~.
    \end{array}
\end{equation}

Let $X\in \g_\C$. Suppose $f$ is a $(\chi_n,\chi_m)$-spherical function. 
Similar computations as those for Lemma \ref{Lemma KxK types} show that 
\begin{equation}
    \RA(X)\cdot \left(\Theta^{\delta(l)}_{l}\ast f\right)\in \left\{\begin{array}{cl}
    \mathcal{S}_{n,m+2} & \text{ if } X=E \text{ and } l-m-1\ne0~,
    \\
    \mathcal{S}_{n,m-2} & \text{ if } X=F\text{ and } l+m-1\ne0~,
\end{array}\right.
\end{equation}
and 
\begin{equation}
    \LA(X)\cdot \left(\Theta^{\delta(l)}_{l}\ast f\right)\in \left\{\begin{array}{cl}
    \mathcal{S}_{n-2,m} & \text{ if } X=E \text{ and } l- n+1\ne0~,
    \\
    \mathcal{S}_{n+2,m} & \text{ if } X=F \text{ and } l +n+1\ne0~.
\end{array}\right.
\end{equation}
It seems not easy here to prove that $\mathcal{S}_{n,m}\cap\E_l$ is nonzero. This is why we use an other method. 
\end{leftbar}
\end{Rem}

We decompose the map $\Resi_l$ which sends $f\in C_c^\infty(G)$ onto $\E_l$ by $f\mapsto \Theta^{\delta(l)}_{l}\ast f$ as follows.  The actions of $G$ on the right and on the left commute, so the invariant subspaces of the left action are stable by the right action and vice-versa. We will do this decomposition for each of these actions.

Define for every representation $(\pi,V_\pi)$, for every $f\in C^\infty_c(G)$, 
\begin{equation}
    \pi(f):= \int_G \pi(x)f(x) dx ~~\in \End(V_\pi).
\end{equation}

For each $n\in 2\Z+\delta(l)$, we define one truncating map $\mathcal{T}_{l,n}$, defined by \begin{equation}
        \fonction{\mathcal{T}_{l,n}}{C^\infty_c(G)}{\Hi^{\delta(l)}}{f}{\pi_l^{\delta(l)}(f)\chi_n~.},
    \end{equation} We denote also by $\mathcal{P}_{l,n}$ the maps 
\begin{equation}
    \fonction{\mathcal{P}_{l,n}}{\Hi^{\delta(l)}}{C^\infty(G)}{\phi}{\left(g\mapsto\left\langle\pi_l^{\delta(l)}(g^{-1}) \phi,\chi_n\right\rangle\right)~.}
\end{equation}
These maps are also called \emph{Poisson transforms} in the literature. We denote, for all $n\in 2\Z+\delta(l)$, by $\E_{l,n}$ the image of the composition of these two maps $\mathcal{P}_{l,n}\circ \mathcal{T}_{l,n}~.$ So \begin{equation}\label{eq Eln}
    \E_{l,n}\simeq\fracobl{\Ima\left(\mathcal{T}_{l,n}\right)}{\Ker\left(\mathcal{P}_{l,n}\right)\cup\Ima\left(\mathcal{T}_{l,n}\right)}
\end{equation}
Then we sum, for all $n\in 2\Z+\delta(l)$, the functions $\mathcal{P}_{l,n}\circ\mathcal{T}_{l,n}(f)$ to get 
    \begin{equation*}\label{eq residue map as the sum of Tl,n}
        \Theta^{\delta(l)}_{l}\ast f(g) = \sum_{n\in 2\Z+\delta(l)}\mathcal{P}_{l,n}\circ\mathcal{T}_{l,n}(f)(g)~.
    \end{equation*}
    In other words, we decompose $\Resi_l$ in the two maps
    \begin{equation}
        \begin{array}{ccc}
         C_c^\infty(G) & \longrightarrow &\bigoplus\limits_{n\in 2\Z+\delta(l)}\Hi^{\delta(l)} \\
         f                  & \longmapsto& (\ldots,\mathcal{T}_{l,\delta(l)-2}(f),\mathcal{T}_{l,\delta(l)}(f),\mathcal{T}_{l,\delta(l)+2}(f),\ldots)~,
    \end{array}
    \end{equation}
    and 
    \begin{equation}
        \begin{array}{ccc}
          \bigoplus\limits_{n\in 2\Z+\delta(l)}\Hi^{\delta(l)} &\longrightarrow & \E_l\\
          (\ldots,\phi_{\delta(l)-2},\phi_{\delta(l)},\phi_{\delta(l)+2},\ldots)                    &\longmapsto &  \sum\limits_{n\in 2\Z+\delta(l)}\mathcal{P}_{l,n}\phi_n  ~.
    \end{array}
    \end{equation}

\begin{Lemma}\label{Lemma image T}
For every fixed $n\in 2\Z+\delta(l)$, the map $\mathcal{T}_{l,n}$ intertwines $(\LA,C_c^{\infty}(G))$ and $(\pi^{\delta(l)}_{l},\Hi^{\delta(l)})$. 
The range of $\mathcal{T}_{l,n}$ is the closure of the space spanned by $\{\pi_l^{\delta(l)}(g)\chi_n~|~g\in G\}$ in $(\pi_l^{\delta(l)},\Hi^{\delta(l)})$. 
\end{Lemma}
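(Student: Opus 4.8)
The plan is to verify the intertwining property by a direct change of variables in the integral defining $\mathcal{T}_{l,n}$, and then to identify the range by a standard density/closure argument using the fact that the representation $\pi_l^{\delta(l)}$ acts continuously.

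First I would check that $\mathcal{T}_{l,n}$ intertwines the left regular action $\LA$ on $C_c^\infty(G)$ with $\pi_l^{\delta(l)}$. By definition $\mathcal{T}_{l,n}(f)=\pi_l^{\delta(l)}(f)\chi_n=\int_G \pi_l^{\delta(l)}(x)f(x)\,dx\,\chi_n$. For $g_0\in G$, one has $\LA(g_0)f(x)=f(g_0^{-1}x)$, so
\begin{align*}
\mathcal{T}_{l,n}(\LA(g_0)f) &= \int_G \pi_l^{\delta(l)}(x)\,f(g_0^{-1}x)\,dx\,\chi_n \\
&= \int_G \pi_l^{\delta(l)}(g_0 x)\,f(x)\,dx\,\chi_n \\
&= \pi_l^{\delta(l)}(g_0)\int_G \pi_l^{\delta(l)}(x)\,f(x)\,dx\,\chi_n = \pi_l^{\delta(l)}(g_0)\,\mathcal{T}_{l,n}(f)~,
\end{align*}
using the substitution $x\mapsto g_0 x$ and the left-invariance of Haar measure on $G$ (which is unimodular). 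This is the required intertwining relation.

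Next I would identify the range. Since $\mathcal{T}_{l,n}(f)=\int_G f(x)\,\pi_l^{\delta(l)}(x)\chi_n\,dx$ is a (vector-valued) average of the vectors $\pi_l^{\delta(l)}(x)\chi_n$ weighted by $f$, every element of $\Ima(\mathcal{T}_{l,n})$ lies in the closed linear span $V:=\overline{\mathrm{span}}\{\pi_l^{\delta(l)}(g)\chi_n~|~g\in G\}$: indeed the integral converges in $\Hi^{\delta(l)}$ as a Bochner integral (the integrand is continuous with compact support because $g\mapsto \pi_l^{\delta(l)}(g)\chi_n$ is continuous and $f\in C_c^\infty(G)$), hence it is a limit of Riemann sums $\sum_j f(x_j)\,\pi_l^{\delta(l)}(x_j)\chi_n\,\Delta_j$, each of which lies in $V$. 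For the reverse inclusion, fix $g\in G$; choosing an approximate identity $(f_\varepsilon)$ in $C_c^\infty(G)$ concentrating at $g$ gives $\mathcal{T}_{l,n}(f_\varepsilon)=\pi_l^{\delta(l)}(f_\varepsilon)\chi_n\to \pi_l^{\delta(l)}(g)\chi_n$ in $\Hi^{\delta(l)}$, by strong continuity of $\pi_l^{\delta(l)}$. Hence each $\pi_l^{\delta(l)}(g)\chi_n$ lies in the closure of $\Ima(\mathcal{T}_{l,n})$, and taking closed linear spans gives $V\subseteq \overline{\Ima(\mathcal{T}_{l,n})}$. Combining the two inclusions, $\overline{\Ima(\mathcal{T}_{l,n})}=V$, which is the asserted description of the range.

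The only genuinely delicate point is the interchange of the $G$-action and the integral, i.e. justifying that $\pi_l^{\delta(l)}(g_0)\int_G(\cdots)\,dx = \int_G \pi_l^{\delta(l)}(g_0)(\cdots)\,dx$; this is immediate once one knows the integral is a Bochner integral in the Hilbert space $\Hi^{\delta(l)}$ and that $\pi_l^{\delta(l)}(g_0)$ is a bounded (indeed unitary, for the relevant real parameter, but in any case continuous) operator, so it commutes with Bochner integration. I expect the main obstacle to be purely bookkeeping: one must be a little careful that for the value $i\lambda = l$ occurring here the representation $\pi_l^{\delta(l)}$ need not be unitary, so "bounded operator" rather than "unitary" is what one invokes; continuity of $g\mapsto\pi_l^{\delta(l)}(g)$ in the strong operator topology on $\Hi^{\delta(l)}$ still holds and suffices for all the limiting arguments above.
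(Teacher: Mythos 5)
Your proposal is correct, and the intertwining computation is exactly the paper's: a change of variables $x\mapsto g_0x$ using left-invariance of Haar measure. For the range, the paper does not argue directly; it invokes an external result (the analogue of Lemma 4.1 of \cite{ROBY2021}) to conclude that $\mathcal{T}_{l,n}(C_c^\infty(G))$ is the closed cyclic subspace generated by $\chi_n$, whereas you give a self-contained two-inclusion argument: Bochner integrals as limits of Riemann sums for $\Ima(\mathcal{T}_{l,n})\subseteq V$, and approximate identities concentrating at $g$ for $\pi_l^{\delta(l)}(g)\chi_n\in\overline{\Ima(\mathcal{T}_{l,n})}$. Your route is more elementary and transparent, and your remarks on boundedness and strong continuity of $\pi_l^{\delta(l)}$ at the non-unitary parameter are exactly the right technical points (for fixed $g$ the multiplier $e^{(i\lambda+1)\rho(\IWH(kg))}$ is bounded on the compact group $K$, so the operators are bounded and locally uniformly so). One honest discrepancy worth noting: you prove $\overline{\Ima(\mathcal{T}_{l,n})}=V$, while the lemma as literally stated asserts $\Ima(\mathcal{T}_{l,n})=V$; since vectors of the form $\pi_l^{\delta(l)}(f)\chi_n$ with $f\in C_c^\infty(G)$ are G\aa rding (smooth) vectors, the exact image cannot be a closed infinite-dimensional subspace, so your formulation is the defensible one and is all that is used downstream (the identification of $\E_{l,n}$ as a subquotient). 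The paper's proof also establishes, beyond the statement, that $\mathcal{T}_{l,n}(\mathcal{S}_{m_1,m_2})=\{0\}$ when $m_2\ne n$, which is used later; you were not asked to prove this and its omission is not a gap in your argument for the lemma itself.
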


\begin{proof}
     Fix $n\in 2\Z+\delta(l)$ and $f\in C_c^{\infty}(G)$. We have, for all $g\in G$
    \begin{align*}
        \mathcal{T}_{l,n}\left(\LA(g)f\right)& = \int_{G}\big(\LA(g)f\big)(x)\pi_l^{\delta(l)}(x)\chi_n dx\\
                                   & = \int_{G}f(x) \pi_l^{\delta(l)}(gx)\chi_n dx\\
                                   & = \pi_l^{\delta(l)}(g)\mathcal{T}_{l,n}(f)~.
    \end{align*}
    So $\mathcal{T}_{l,n}$ is an intertwining map and for all $k \in K$ and $f\in\mathcal{S}_{m_1,m_2}$:
    \begin{equation*}
        \pi_l^{\delta(l)}(k)\mathcal{T}_{l,n}(f)=\mathcal{T}_{l,n}\left(\LA(k)f\right) = \chi_{m_1}(k)\mathcal{T}_{l,n}(f)~.
    \end{equation*}
    But 
    \begin{align*}
        \mathcal{T}_{l,n}(f)&=\int_{G}f(x)\pi_l^{\delta(l)}(x)\chi_n dx\\
                            &=\int_{G}\int_{K\times K}f(kxh)\pi_l^{\delta(l)}(kxh)\chi_n dkdhdx\\
                            &=\int_{G}\left(\int_{K}\chi_{m_2}(h^{-1})\chi_n(h)dh\right)~\left(\int_{K}\chi_{m_1}(k)\pi_l^{\delta(l)}(kx)\chi_n dk\right)~f(x)dx\\
                            &=\delta_{m_2,n} \int_{G}~\left(\int_{K}\chi_{m_1}(k)\pi_l^{\delta(l)}(kx)\chi_n dk\right)~f(x)dx~,
    \end{align*}
    where $\delta$ is the Krönecker symbol. So $\mathcal{T}_{l,n}\left(\mathcal{S}_{m_1,m_2}\right) = \{0\}$ if $m_2 \ne n$. 
    Using the same proof as \cite[Lemma 4.1]{ROBY2021} we have also that $\mathcal{T}_{l,n}(C_c^{\infty}(G))$ is the closed subspace of $\Hi^{\delta(l)}$ spanned by the left translates by $G$ of $\chi_n$. This gives the the space spanned by $\{\pi_l^{\delta(l)}(g)\chi_n~|~g\in G\}$ in the principal series $(\pi_l^{\delta(l)},\Hi^{\delta(l)})$ (see Proposition \ref{Prop representations of SL2}). 
\end{proof}

\begin{Lemma}\label{Lemma Kern P}
    For every $n\in 2\Z+\delta(l)$, the map $\mathcal{P}_{l,n}$ intertwines $(\pi^{\delta(l)}_{l},\Hi^{\delta(l)})$ and $(\LA,C_c^{\infty}(G))$. The Kernel of $\mathcal{P}_{l,n}$ is the smallest quotient of $(\pi^{\delta(l)}_{l},\Hi^{\delta(l)})$ containing $\chi_n$. 
\end{Lemma}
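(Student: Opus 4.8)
The plan is to treat the two assertions separately: the intertwining property is a one-line matrix-coefficient computation, and the description of the kernel comes from realizing $\mathcal P_{l,n}$ as (essentially) the transpose of $\mathcal T_{l,n}$ and invoking the self-duality of the principal series. For the first claim, for $h,g\in G$ and $\phi\in\Hi^{\delta(l)}$ one computes
$$\mathcal{P}_{l,n}\big(\pi^{\delta(l)}_l(h)\phi\big)(g)=\big\langle\pi^{\delta(l)}_l(g^{-1}h)\,\phi,\chi_n\big\rangle=\mathcal{P}_{l,n}(\phi)(h^{-1}g)=\big(\LA(h)\,\mathcal{P}_{l,n}(\phi)\big)(g),$$
so $\mathcal{P}_{l,n}\circ\pi^{\delta(l)}_l(h)=\LA(h)\circ\mathcal{P}_{l,n}$; in particular $\Ker\mathcal{P}_{l,n}$ is a closed $\pi^{\delta(l)}_l$-submodule of $\Hi^{\delta(l)}$.

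To identify this kernel I would first move $g$ to the other slot of the inner product via the adjoint relation recalled in the proof of Lemma \ref{Lemma KxK types}, which is legitimate here because $l\in\Z_+$ is real and hence $\pi^{\delta(l)}_l(x)^{\ast}=\pi^{\delta(l)}_{-l}(x^{-1})$ for the $L^2(K)$ pairing. This gives $\mathcal{P}_{l,n}(\phi)(g)=\big\langle\phi,\pi^{\delta(l)}_{-l}(g)\chi_n\big\rangle$, so that $\phi\in\Ker\mathcal{P}_{l,n}$ exactly when $\phi$ is orthogonal to $\pi^{\delta(l)}_{-l}(g)\chi_n$ for every $g\in G$, i.e.
$$\Ker\mathcal{P}_{l,n}=M_n^{\perp},\qquad M_n:=\overline{\operatorname{span}}\big\{\pi^{\delta(l)}_{-l}(g)\chi_n\ \big|\ g\in G\big\},$$
and $M_n$ is the smallest closed $\pi^{\delta(l)}_{-l}$-submodule of $\Hi^{\delta(l)}$ containing $\chi_n$ (this is Lemma \ref{Lemma image T} applied to the principal series $\pi^{\delta(l)}_{-l}$ in place of $\pi^{\delta(l)}_{l}$).

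It remains to transport this across the duality $(\pi^{\delta(l)}_{-l},\Hi^{\delta(l)})\cong(\pi^{\delta(l)}_{l},\Hi^{\delta(l)})^{\vee}$ furnished by Proposition \ref{Prop representations of SL2}, the pairing being the $L^2(K)$ inner product. Both modules have finite length with all $K$-types of multiplicity one, so $V\mapsto V^{\perp}$ is an inclusion-reversing bijection between the (finitely many) closed $\pi^{\delta(l)}_l$-submodules and the closed $\pi^{\delta(l)}_{-l}$-submodules, and a $K$-type $\chi_p$ lies in $V$ iff it does not lie in $V^{\perp}$. Hence a $\pi^{\delta(l)}_l$-submodule $V$ fails to contain $\chi_n$ iff $\chi_n\in V^{\perp}$ iff $V^{\perp}\supseteq M_n$ iff $V\subseteq M_n^{\perp}=\Ker\mathcal{P}_{l,n}$; since $\chi_n\in M_n$ the kernel itself avoids $\chi_n$, so $\Ker\mathcal{P}_{l,n}$ is the largest $\pi^{\delta(l)}_l$-submodule of $\Hi^{\delta(l)}$ not containing $\chi_n$, equivalently $\Hi^{\delta(l)}/\Ker\mathcal{P}_{l,n}\cong\Ima\mathcal{P}_{l,n}$ is the smallest quotient of $(\pi^{\delta(l)}_l,\Hi^{\delta(l)})$ in which $\chi_n$ has nonzero image. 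Plugging in the composition series of Proposition \ref{Prop representations of SL2} one can then even write $\Ker\mathcal{P}_{l,n}$ explicitly: it is $D^{l+1}\oplus D^{-(l+1)}$ when $|n|\le l-1$, and $D^{-(l+1)}$ (resp. $D^{l+1}$) when $n\ge l+1$ (resp. $n\le-(l+1)$).

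The matrix-coefficient manipulations are routine; the only delicate point is the precise bookkeeping of the contragredient pairing between the two \emph{non-unitary} principal series $\pi^{\delta(l)}_{\pm l}$ — I would carry out the orthogonal-complement correspondence on $K$-finite vectors, where the multiplicity-one property makes the submodule lattice and the pairing completely explicit, and only afterwards pass to the Hilbert-space closures.
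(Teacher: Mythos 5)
Your proof is correct, and it proves the statement in the sense the paper intends: $\Ker\mathcal{P}_{l,n}$ is the largest closed $\pi^{\delta(l)}_{l}$-submodule of $\Hi^{\delta(l)}$ not containing $\chi_n$, equivalently $\Hi^{\delta(l)}/\Ker\mathcal{P}_{l,n}$ is the smallest quotient in which $\chi_n$ survives (the lemma's phrase ``the kernel is the smallest quotient'' must be read this way, and your reading matches the paper's own conclusion). The intertwining computation is identical to the paper's. For the kernel the routes differ in organization: the paper evaluates $\mathcal{P}_{l,n}$ on each $K$-type, notes $\mathcal{P}_{l,n}(\chi_m)\in\mathcal{S}_{m,n}$, and then decides case by case, from the composition-series picture of $\pi^{\delta(l)}_{l}$ in Figure \ref{Fig Composition series of pi l}, for which pairs $(m,n)$ the coefficient $\langle\pi^{\delta(l)}_{l}(g^{-1})\chi_m,\chi_n\rangle$ vanishes identically; you instead use the adjoint relation already invoked in Lemma \ref{Lemma KxK types} to write $\mathcal{P}_{l,n}(\phi)(g)=\langle\phi,\pi^{\delta(l)}_{-l}(g)\chi_n\rangle$, identify the kernel as the orthocomplement of the cyclic $\pi^{\delta(l)}_{-l}$-module generated by $\chi_n$, and transport this through the duality between $\pi^{\delta(l)}_{l}$ and $\pi^{\delta(l)}_{-l}$ recorded in Proposition \ref{Prop representations of SL2}, using multiplicity-one $K$-types. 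Both arguments rest on the same structural input (the submodule lattice of the principal series at integral parameter and multiplicity one), but your orthocomplement formulation makes the ``largest submodule avoiding $\chi_n$'' conclusion automatic rather than a case check, and your explicit identification of the kernel ($D^{l+1}\oplus D^{-(l+1)}$ for $|n|\le l-1$, $D^{-(l+1)}$ for $n\ge l+1$, $D^{l+1}$ for $n\le -(l+1)$) agrees with the paper's vanishing table. The two points you flag at the end are indeed the only ones worth writing out: boundedness of $\pi^{\delta(l)}_{\pm l}(g)$ on the $L^2(K)$ completion so that the adjoint identity extends from the vectors $\chi_j$ to all of $\Hi^{\delta(l)}$, and the fact that a closed invariant subspace is the closure of the sum of the (multiplicity-one) $K$-types it contains, which is what makes $V\mapsto V^{\perp}$ an inclusion-reversing bijection of submodule lattices.
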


\begin{proof}
Let $n\in 2\Z+\delta(l)$ and $\phi\in \Hi^{\delta(l)}$. We have, for all $g,x\in G$ 
    \begin{align*}
        \mathcal{P}_{l,n}(\pi_l^{\delta(l)}(g)\phi)(x)&= \left\langle\pi_l^{\delta(l)}(x^{-1}) \pi_l^{\delta(l)}(g)\phi,\chi_n\right\rangle\\
                            &= \left\langle\pi_l^{\delta(l)}\left((g^{-1}x)^{-1}\right) \phi,\chi_n\right\rangle\\
                            &=\mathcal{P}_{l,n}(\phi)(g^{-1}x) = \LA(g)\left(\mathcal{P}_{l,n}(f)\right)(x)
    \end{align*}
So $\mathcal{P}_{l,n}$ is an intertwining map. 
Moreover, for all $m\in 2\Z+\delta(l)$, $k_1,k_2\in K$ and $g\in G$, 
\begin{equation*}
    \mathcal{P}_{l,n}(\chi_m)(k_1gk_2) = \left\langle\pi_l^{\delta(l)}(k_2^{-1}g^{-1}k_1^{-1}) \chi_m,\chi_n\right\rangle = \overline{\chi_m(k_1)}\overline{\chi_n(k_2)}\mathcal{P}_{l,n}(\chi_m)(g)~.
\end{equation*}
So $\mathcal{P}_{l,n}(\chi_m)\in \mathcal{S}_{m,n}$, but it could be zero. Recall the composition series in Proposition \ref{Prop representations of SL2} of $(\pi_l^{\delta(l)},\Hi_l^{\delta(l)})$:
\begin{center}
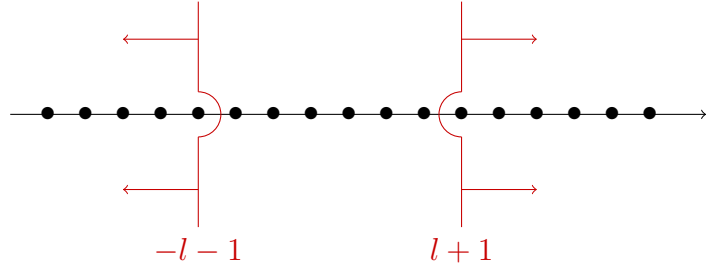

\begin{tikzpicture}[scale=1]
\draw [->](-5,0) to (4.25,0);
\draw [Rouge,-](-2.5,-1.5) to (-2.5,-0.3);
\draw [Rouge,-](-2.5,0.3) to (-2.5,1.5);
\draw [Rouge](-2.5,-0.3) arc (-90:90:0.3);
\draw [Rouge,->](-2.5,1) to (-3.5,1);
\draw [Rouge,->](-2.5,-1) to (-3.5,-1);
\draw [Rouge,-](1,-1.5) to (1,-0.3);
\draw [Rouge,-](1,0.3) to (1,1.5);
\draw [Rouge](1,-0.3) arc (270:90:0.3);
\draw [Rouge,->](1,1) to (2,1);
\draw [Rouge,->](1,-1) to (2,-1);
\draw (1,-1.5) node [Rouge, below]{$l+1$};
\draw (-2.5,-1.5) node [Rouge, below]{$-l-1$};
\foreach \i in {-9,...,7}{\draw (\i/2,0) node{$\bullet$};}
\end{tikzpicture}
\captionof{figure}{Composition series of $\pi_l^{\delta(l)}$}\label{Fig Composition series of pi l}
\end{center}
In Figure \ref{Fig Composition series of pi l}, the bullets represent the $K$-types $\Ind_K^G\chi_m$, for $m\in 2\Z+\delta(l)$ and the action $\pi_l^{\delta(l)}(G)$ can just pass the red walls in the sense of the arrows. 
Thus, $\mathcal{P}_{l,n}(\chi_m)= \left\langle\pi_l^{\delta(l)}(g^{-1}) \chi_m,\chi_n\right\rangle = 0$ if and only if \begin{equation}
    \begin{array}{rl}
        n \in [l+1,+\infty[ & \text{ and } m\in ]-\infty,-l-1],  \\
         \text{or } n \in [-l+1,l-1] & \text{ and } m\in ]-\infty,-l-1]\cup [l+1,+\infty[, \\
         \text{or } n \in ]-\infty,-l-1] & \text{ and } m\in [l+1,+\infty[~.\\
    \end{array}
\end{equation}
Thus the kernel of $\mathcal{P}_{l,n}$ is the smallest quotient of $(\pi_l^{\delta(l)},\Hi_l^{\delta(l)})$ containing $\chi_n$. 
\end{proof}

Denote for all $m\in 2\Z+\delta(l)$, \begin{equation}
    [m]=\left\{j\in2\Z+\delta(l)~|~ \chi_j  \text{ is in the same component as } \chi_m \text{ in }  (\pi_l^{\delta(l)},\Hi_l^{\delta(l)})\right\}
\end{equation}
We mean by \emph{components} the spaces delimited by the barriers without regarding the arrows, in other words, the irreducible subquotients. 
From \eqref{eq Eln}, it follows that:

\begin{Prop}\label{Prop Eln repr}
The representation $\left(\LA,\E_{l,n}\right)$ is equivalent to the smallest subquotient of the principal series $(\pi^{\delta(l)}_l,\Hi^{\delta(l)}_l)$ containing $\chi_n$. It is included in the closure of the space spanned by 
\begin{equation}
    \bigoplus_{j\in [n]} \mathcal{S}_{j,n}~.
\end{equation}
\end{Prop}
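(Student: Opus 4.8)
The plan is to feed Lemmas \ref{Lemma image T} and \ref{Lemma Kern P} into the isomorphism \eqref{eq Eln} and then read off the resulting subquotient from the explicit composition series of $(\pi^{\delta(l)}_l,\Hi^{\delta(l)}_l)$ recorded in Proposition \ref{Prop representations of SL2} and drawn in Figure \ref{Fig Composition series of pi l}. Write $V_n:=\Ima(\mathcal{T}_{l,n})$ and $W_n:=\Ker(\mathcal{P}_{l,n})$. For composable linear maps one has the general identity $\Ima(\mathcal{P}_{l,n}\circ\mathcal{T}_{l,n})\simeq\Ima(\mathcal{T}_{l,n})/\big(\Ker(\mathcal{P}_{l,n})\cap\Ima(\mathcal{T}_{l,n})\big)$, which is exactly what \eqref{eq Eln} records; since both maps are $G$-equivariant (Lemmas \ref{Lemma image T} and \ref{Lemma Kern P}), this is an isomorphism of $G$-modules $(\LA,\E_{l,n})\simeq\big(\pi^{\delta(l)}_l,\,V_n/(W_n\cap V_n)\big)$. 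By Lemma \ref{Lemma image T}, $V_n$ is the smallest closed $G$-submodule of $\Hi^{\delta(l)}$ containing $\chi_n$; by Lemma \ref{Lemma Kern P}, $W_n$ is the submodule for which $\Hi^{\delta(l)}/W_n$ is the smallest quotient in which $\chi_n$ survives, equivalently the largest $G$-submodule of $\Hi^{\delta(l)}$ whose $K$-spectrum omits $\chi_n$.

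The main step is to identify $V_n/(W_n\cap V_n)$ with the unique irreducible subquotient $L$ of $\Hi^{\delta(l)}_l$ having $\chi_n$ among its $K$-types. Such an $L$ is unique: $\Hi^{\delta(l)}_l$ has finite length and multiplicity-free $K$-spectrum, and its three composition factors --- $D^{l+1}$ ($K$-types $\chi_m$, $m\ge l+1$), $D^{-l-1}$ ($\chi_m$, $m\le -l-1$) and the $l$-dimensional subquotient $F_l:=\Hi^{\delta(l)}_l/(D^{l+1}\oplus D^{-l-1})$ ($\chi_m$, $-l+1\le m\le l-1$) --- partition the $K$-spectrum, and this partition is exactly the classes $[m]$, so $L$ is the factor indexed by $[n]$. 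To match $V_n/(W_n\cap V_n)$ with $L$ I will run through the (at most three) cases, using the explicit action $\pi_l^{\delta}(E)\chi_m=(m+l+1)\chi_{m+2}$ and $\pi_l^{\delta}(F)\chi_m=(m-l-1)\chi_{m-2}$, i.e. which walls of Figure \ref{Fig Composition series of pi l} the $G$-action may cross: if $n\ge l+1$ then $V_n=D^{l+1}$ (an irreducible submodule) and $W_n\cap V_n=0$, so $\E_{l,n}\simeq D^{l+1}$; symmetrically $\E_{l,n}\simeq D^{-l-1}$ for $n\le -l-1$; and if $-l+1\le n\le l-1$ then raising $\chi_n$ by $E$ reaches $\chi_{l-1}$ and then crosses into $D^{l+1}$ (as $\pi_l^\delta(E)\chi_{l-1}=2l\,\chi_{l+1}\ne0$), lowering by $F$ likewise reaches and crosses into $D^{-l-1}$, so $V_n=\Hi^{\delta(l)}_l$ and $W_n\cap V_n=D^{l+1}\oplus D^{-l-1}$, whence $\E_{l,n}\simeq F_l$. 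In every case $V_n/(W_n\cap V_n)\simeq L$, which is by definition the smallest subquotient of $(\pi^{\delta(l)}_l,\Hi^{\delta(l)}_l)$ containing $\chi_n$.

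For the last assertion, observe that every element of $\E_{l,n}$ is of the form $\mathcal{P}_{l,n}(\phi)$ with $\phi\in V_n$. Writing $\phi=\sum_j\phi_j$ for the decomposition into $\chi_j$-isotypic components, the computation in the proof of Lemma \ref{Lemma Kern P} gives $\mathcal{P}_{l,n}(\phi_j)\in\mathcal{S}_{j,n}$, and this is nonzero only when $\chi_j$ has nonzero image in $V_n/(W_n\cap V_n)\simeq L$, that is, only when $j\in[n]$. Summing over $j$ shows $\E_{l,n}$ lies in the closed span of $\bigoplus_{j\in[n]}\mathcal{S}_{j,n}$.

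I expect the only real friction to be bookkeeping: checking that the $K$-type description of submodules and quotients remains valid at the level of the Hilbert completions $\Hi^{\delta(l)}$ (it does, since the multiplicity-free $K$-spectrum means a closed $G$-submodule is pinned down by the set of $K$-types it contains), and verifying the degenerate small cases where the three-factor picture collapses --- for $l=0$ there is no finite-dimensional factor, as $\Hi^{\delta(0)}_0=\Hi^1_0=D^1\oplus D^{-1}$ is already a direct sum of two limits of discrete series, and for $l=1$ the factor $F_l$ is $1$-dimensional, namely the trivial representation.
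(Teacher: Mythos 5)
Your proposal is correct and follows essentially the same route as the paper: the paper's proof of this proposition is precisely the one-line deduction from \eqref{eq Eln} combined with Lemmas \ref{Lemma image T} and \ref{Lemma Kern P} and the composition series of Proposition \ref{Prop representations of SL2}, which you simply carry out case by case (correctly reading the $\cup$ in \eqref{eq Eln} as an intersection). Your added checks --- the Hilbert-completion/multiplicity-one point, the degenerate cases $l=0,1$, and the pointwise argument for the inclusion in the closed span of $\bigoplus_{j\in[n]}\mathcal{S}_{j,n}$ --- are details the paper leaves implicit, not a different method.
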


One can prove similar results to Lemmas \ref{Lemma image T}, \ref{Lemma Kern P} and \ref{Prop Eln repr} for the right action of $G$ on $\E_l$ and the principal series $(\pi^{\delta(l)}_{-l},\Hi^{\delta(l)})$ replacing $\mathcal{T}_{l,n}$ and $\mathcal{P}_{l,n}$ respectively by 
\begin{equation}
    \fonction{\mathcal{T}^\RA_{l,n}}{C_c^\infty(G)}{\Hi^{\delta(l)}}{f}{\displaystyle\int_{G}\pi_{-l}^{\delta(l)}(x^{-1})\chi_n \overline{f(x)}dx~,}
\end{equation}
and 
\begin{equation}
    \fonction{\mathcal{P}^\RA_{l,n}}{\Hi^{\delta(l)}}{C_c^\infty(G)}{\phi}{\left(g\mapsto\left\langle\pi_l^{\delta(l)}(g^{-1})\chi_n,\phi\right\rangle\right)~.}
\end{equation}
That is:

\begin{Lemma}\label{Lemma image T R}
For all $n\in 2\Z+\delta(l)$, the map $\mathcal{T}^\RA_{l,n}$ is intertwining between $(\RA,C_c^{\infty}(G))$ and $(\pi^{\delta(l)}_{-l},\Hi^{\delta(l)})$. 
The range of $\mathcal{T}^\RA_{l,n}$ is the closure of the space spanned by $\{\pi_{-l}^{\delta(l)}(g)\chi_n~|~g\in G\}$ in $(\pi_{-l}^{\delta(l)},\Hi^{\delta(l)})$
\end{Lemma}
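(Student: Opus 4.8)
The plan is to mirror, almost verbatim, the proof of Lemma \ref{Lemma image T} but with the roles of left and right translation interchanged and with the appropriate change of variable $x\mapsto x^{-1}$ built into the definition of $\mathcal{T}^\RA_{l,n}$. First I would check the intertwining property: for $f\in C_c^\infty(G)$ and $g\in G$, compute
\begin{align*}
    \mathcal{T}^\RA_{l,n}\big(\RA(g)f\big) &= \int_{G}\pi_{-l}^{\delta(l)}(x^{-1})\chi_n\,\overline{f(xg)}\,dx\\
    &= \int_{G}\pi_{-l}^{\delta(l)}(gx^{-1})\chi_n\,\overline{f(x)}\,dx = \pi_{-l}^{\delta(l)}(g)\,\mathcal{T}^\RA_{l,n}(f)~,
\end{align*}
using the right-invariance of Haar measure and the substitution $x\mapsto xg^{-1}$, so that $(xg)^{-1}=g^{-1}x^{-1}\mapsto g^{-1}(xg^{-1})^{-1}$; one has to be slightly careful here, but the point is exactly that the inverse in $x^{-1}$ converts the right action into a left action of $\pi_{-l}^{\delta(l)}$ on its argument. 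This is the step that is the direct analogue of the three-line display in Lemma \ref{Lemma image T}, and the only genuinely new thing to verify.

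Next I would record, exactly as in the proof of Lemma \ref{Lemma image T}, that $\mathcal{T}^\RA_{l,n}$ kills $\mathcal{S}_{m_1,m_2}$ unless $m_1=n$: inserting the $K\times K$ averaging $f(x)=\int_{K\times K}f(kxh)\,dk\,dh$ for $f\in\mathcal{S}_{m_1,m_2}$, the integral over the variable that couples to $\chi_n$ through $\pi_{-l}^{\delta(l)}(x^{-1})$ produces a Kronecker delta $\delta_{m_1,n}$ (now it is the \emph{left} $K$-variable of $f$, i.e.\ the first index $m_1$, because we conjugated by the inverse), and the other $K$-integral leaves a well-defined operator. Since $\RA(K)$-equivariance forces $\pi_{-l}^{\delta(l)}(k)\mathcal{T}^\RA_{l,n}(f)=\chi_{n}(k)\,\mathcal{T}^\RA_{l,n}(f)$ (reading off the appropriate index), the image lands in the $\chi_n$-isotypic line, hence every left translate $\pi_{-l}^{\delta(l)}(g)\chi_n$ is in the closed range, and conversely the range is contained in their closed span by density of $C_c^\infty(G)$ and continuity.

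To finish I would invoke, just as the original lemma does, the argument of \cite[Lemma 4.1]{ROBY2021}: the closure of $\mathcal{T}^\RA_{l,n}(C_c^\infty(G))$ equals the closed $\pi_{-l}^{\delta(l)}(G)$-invariant subspace of $\Hi^{\delta(l)}$ generated by $\chi_n$, which by the description of the composition series in Proposition \ref{Prop representations of SL2} (Figure \ref{Fig Composition series of pi l}, with $l$ replaced by $-l$, equivalently reading the diagram in the reflected direction) is precisely the span of $\{\pi_{-l}^{\delta(l)}(g)\chi_n\mid g\in G\}$. The only mild obstacle I anticipate is bookkeeping: making sure that the $x\mapsto x^{-1}$ and the complex conjugation $\overline{f(x)}$ in the definition of $\mathcal{T}^\RA_{l,n}$ are handled consistently so that one really obtains an honest intertwiner for $\pi_{-l}^{\delta(l)}$ (and not, say, for its contragredient or its complex conjugate), but since $\pi_{-l}^{\delta(l)}$ is obtained from $\pi_{l}^{\delta(l)}$ by the duality stated in Proposition \ref{Prop representations of SL2} this matches the intended statement. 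No new ideas beyond those already used for the left action are required.
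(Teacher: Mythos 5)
Your overall route is the same as the paper's: the paper proves only the left-action statement (Lemma \ref{Lemma image T}) in detail and obtains the present lemma by the mirror-image computation, and your first and third paragraphs carry that out correctly. The intertwining display is right: with the substitution $x\mapsto xg^{-1}$ and unimodularity of $G$ one gets $\mathcal{T}^\RA_{l,n}(\RA(g)f)=\pi_{-l}^{\delta(l)}(g)\,\mathcal{T}^\RA_{l,n}(f)$ (as a conjugate-linear map, because of the $\overline{f}$; this is exactly the covariance the paper uses, so your worry about the contragredient is moot). For the range, the clean way to phrase the reduction you invoke is to substitute $x\mapsto x^{-1}$ in the definition: $\mathcal{T}^\RA_{l,n}(f)=\pi_{-l}^{\delta(l)}(\check f)\chi_n$ with $\check f(x):=\overline{f(x^{-1})}$, and since $f\mapsto\check f$ is a bijection of $C_c^\infty(G)$, the argument of \cite[Lemma 4.1]{ROBY2021} applies verbatim and identifies the closure of the range with the closed span of $\{\pi_{-l}^{\delta(l)}(g)\chi_n\mid g\in G\}$. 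That is precisely how the paper disposes of this lemma.

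There is, however, one wrong sub-claim in your middle paragraph, harmless only because you never actually use it. For $f\in\mathcal{S}_{m_1,m_2}$ the $K$-covariance reads $\pi_{-l}^{\delta(l)}(k)\,\mathcal{T}^\RA_{l,n}(f)=\chi_{m_2}(k)\,\mathcal{T}^\RA_{l,n}(f)$: the image of $\mathcal{S}_{m_1,m_2}$ lies in the $\chi_{m_2}$-isotypic line, while the Kronecker delta $\delta_{m_1,n}$ only governs vanishing, i.e.\ $\mathcal{T}^\RA_{l,n}(\mathcal{S}_{m_1,m_2})=0$ unless $m_1=n$ (this mirrors the left case, where the image of $\mathcal{S}_{m_1,m_2}$ sits in the $\chi_{m_1}$-line and vanishes unless $m_2=n$). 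Your assertion that ``the image lands in the $\chi_n$-isotypic line'' cannot be correct, and the inference you draw from it is a non sequitur: the range is the closed span of $\pi_{-l}^{\delta(l)}(G)\chi_n$, which contains infinitely many $K$-types, so it is not confined to a single isotypic line, and confinement to a line could never yield all the translates $\pi_{-l}^{\delta(l)}(g)\chi_n$. Delete that sentence (or correct the index to $m_2$); the intertwining relation together with the Lemma 4.1 argument already gives both assertions of the lemma, so the proof stands after this local repair.
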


\begin{Lemma}\label{Lemma Kern P R}
    For all $n\in 2\Z+\delta(l)$, the map $\mathcal{P}^\RA_{l,n}$ is intertwining between $(\pi^{\delta(l)}_{-l},\Hi^{\delta(l)})$ and $(\RA,C_c^{\infty}(G))$. The Kernel of $\mathcal{P}^\RA_{l,n}$ is the smallest quotient of $(\pi^{\delta(l)}_{-l},\Hi^{\delta(l)})$ containing $\chi_n$. 
\end{Lemma}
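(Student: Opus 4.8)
The plan is to mirror the arguments already carried out for the left action in Lemmas \ref{Lemma image T} and \ref{Lemma Kern P}, with the appropriate modifications forced by the fact that the right regular representation $\RA$ appears together with a complex conjugation in the definitions of $\mathcal{T}^\RA_{l,n}$ and $\mathcal{P}^\RA_{l,n}$. The key structural input is that the left and right actions of $G$ on $C^\infty(G)$ commute, so every computation done on the left has a formal counterpart on the right; the only genuinely new point is to track where the representation parameter becomes $-l$ instead of $l$ and where the bracket picks up a conjugate argument.

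First I would prove the intertwining property in Lemma \ref{Lemma image T R}. For $\mathcal{T}^\RA_{l,n}$, fix $n\in 2\Z+\delta(l)$, $f\in C_c^\infty(G)$ and $g\in G$, and compute
\begin{align*}
\mathcal{T}^\RA_{l,n}\big(\RA(g)f\big) &= \int_G \pi_{-l}^{\delta(l)}(x^{-1})\chi_n\,\overline{f(xg)}\,dx\\
&= \int_G \pi_{-l}^{\delta(l)}(g x^{-1})\chi_n\,\overline{f(x)}\,dx = \pi_{-l}^{\delta(l)}(g)\,\mathcal{T}^\RA_{l,n}(f)~,
\end{align*}
using the substitution $x\mapsto xg^{-1}$ and the fact that Haar measure on $G$ is unimodular. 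For the range statement, I would argue exactly as in Lemma \ref{Lemma image T}: one first shows, by integrating over $K\times K$ against characters as in the displayed computation there, that $\mathcal{T}^\RA_{l,n}$ kills $\mathcal{S}_{m_1,m_2}$ unless $m_1 = n$ (the conjugate in the definition swaps the two $K$-variables relative to the left case, which is why it is the \emph{first} index that is pinned here), and then invokes the argument of \cite[Lemma 4.1]{ROBY2021} to identify the image with the closed span of $\{\pi_{-l}^{\delta(l)}(g)\chi_n \mid g\in G\}$ inside $(\pi_{-l}^{\delta(l)},\Hi^{\delta(l)})$.

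For Lemma \ref{Lemma Kern P R}, I would likewise transpose the proof of Lemma \ref{Lemma Kern P}. The intertwining computation is
\begin{align*}
\mathcal{P}^\RA_{l,n}\big(\pi_{-l}^{\delta(l)}(g)\phi\big)(x) &= \big\langle \pi_l^{\delta(l)}(x^{-1})\chi_n,\pi_{-l}^{\delta(l)}(g)\phi\big\rangle = \big\langle \pi_l^{\delta(l)}\big((xg)^{-1}\big)\chi_n,\phi\big\rangle\\ &= \mathcal{P}^\RA_{l,n}(\phi)(xg) = \RA(g)\big(\mathcal{P}^\RA_{l,n}(\phi)\big)(x)~,
\end{align*}
where one uses that $\pi_{-l}^{\delta(l)}$ is, up to the standard intertwining operator, the adjoint/contragredient picture compatible with the pairing against $\pi_l^{\delta(l)}$ — this is the one place where I would need to be careful, so let me flag it below. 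Granting that, the kernel is computed by reading off Figure \ref{Fig Composition series of pi l}: $\mathcal{P}^\RA_{l,n}(\chi_m) = \langle \pi_l^{\delta(l)}(g^{-1})\chi_n,\chi_m\rangle$ vanishes precisely when $\chi_m$ cannot be reached from $\chi_n$ by following the arrows, which is exactly the condition that $\chi_m$ lies in a proper submodule not containing $\chi_n$; hence $\Ker \mathcal{P}^\RA_{l,n}$ is the largest submodule of $(\pi_{-l}^{\delta(l)},\Hi^{\delta(l)})$ whose quotient still contains the image of $\chi_n$, i.e. the smallest quotient containing $\chi_n$.

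\textbf{Main obstacle.} The one step that is not purely formal is verifying that the pairing $\langle \pi_l^{\delta(l)}(g^{-1})\chi_n,\phi\rangle$ used in $\mathcal{P}^\RA_{l,n}$ really does furnish an intertwiner \emph{for the parameter $-l$} on the target side, i.e.\ that the $L^2$ inner product on $\Hi^{\delta(l)}$ pairs $(\pi_l^{\delta(l)},\Hi^{\delta(l)})$ with $(\pi_{-l}^{\delta(l)},\Hi^{\delta(l)})$ in a $G$-equivariant way (the duality statement "$\Hi^\delta_{m-1}$ and $\Hi^\delta_{-m+1}$ are dual to each other" recorded in Proposition \ref{Prop representations of SL2}). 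Once this duality is made precise — which amounts to the identity $\langle \pi_{\lambda}^\delta(x)u,v\rangle = \langle u,\pi_{-\overline\lambda}^\delta(x^{-1})v\rangle$ already invoked in the proof of Lemma \ref{Lemma KxK types}, specialized to $\lambda = l$ real so that $-\overline\lambda = -l$ — everything else is a line-by-line transcription of the left-action arguments, and the composition-series bookkeeping via Figure \ref{Fig Composition series of pi l} is identical because the diagram for $\pi_{-l}^{\delta(l)}$ is the mirror image of that for $\pi_l^{\delta(l)}$ and the span/kernel descriptions are symmetric under that reflection.
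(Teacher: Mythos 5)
Your proposal is correct and follows exactly the route the paper intends: the paper proves Lemma \ref{Lemma Kern P R} only by asserting it is the mirror of Lemmas \ref{Lemma image T} and \ref{Lemma Kern P}, and your transcription carries this out, correctly identifying the one non-formal point (that for the real parameter $l$ the $L^2$ pairing satisfies $\langle\pi^{\delta}_{l}(x)u,v\rangle=\langle u,\pi^{\delta}_{-l}(x^{-1})v\rangle$, so the target of $\mathcal{P}^\RA_{l,n}$ carries the $\RA$-action while the source carries $\pi^{\delta(l)}_{-l}$) and reading the kernel off the composition series of Figure \ref{Fig Composition series of pi l} just as in the left-handed case.
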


\begin{Prop}\label{Prop Eln repr R}
The representation $\left(\RA,\E_{l,n}\right)$ is equivalent to the smallest subquotient of the principal series $(\pi^{\delta(l)}_{-l},\Hi^{\delta(l)})$ containing $\chi_n$. It is included in the closure of the space spanned by 
\begin{equation}
    \bigoplus_{j\in [n]} \mathcal{S}_{n,j}~.
\end{equation}
\end{Prop}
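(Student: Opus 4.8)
The plan is to run the proof of Proposition \ref{Prop Eln repr} on the other side, replacing the left maps $\mathcal{T}_{l,n},\mathcal{P}_{l,n}$ by their right counterparts $\mathcal{T}^\RA_{l,n},\mathcal{P}^\RA_{l,n}$ and the principal series $(\pi^{\delta(l)}_{l},\Hi^{\delta(l)})$ by $(\pi^{\delta(l)}_{-l},\Hi^{\delta(l)})$; here $\E_{l,n}$ is understood as the image of $\mathcal{P}^\RA_{l,n}\circ\mathcal{T}^\RA_{l,n}$, the right-hand analogue of the space defined just before \eqref{eq Eln}. Since $\mathcal{P}^\RA_{l,n}\circ\mathcal{T}^\RA_{l,n}$ factors through $\Hi^{\delta(l)}$ and, by Lemmas \ref{Lemma image T R} and \ref{Lemma Kern P R}, both maps are $\RA$-equivariant, the first isomorphism theorem yields
\[
  (\RA,\E_{l,n})\ \simeq\ \fracobl{\Ima\left(\mathcal{T}^\RA_{l,n}\right)}{\Ker\left(\mathcal{P}^\RA_{l,n}\right)\cap\Ima\left(\mathcal{T}^\RA_{l,n}\right)}~,
\]
which is \eqref{eq Eln} on the right. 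The two complex conjugations built into $\mathcal{T}^\RA_{l,n}$ and $\mathcal{P}^\RA_{l,n}$ cancel, so this composite is $\C$-linear, and by the right-hand analogue of \eqref{eq residue map as the sum of Tl,n} — proved by the same computation, using the adjoint formula recalled in the proof of Lemma \ref{Lemma KxK types} — the spaces $\E_{l,n}$ together recover $\E_l$.

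I would then identify the two modules in this quotient. By Lemma \ref{Lemma image T R}, $\Ima(\mathcal{T}^\RA_{l,n})$ is the closure of the span of $\{\pi^{\delta(l)}_{-l}(g)\chi_n\mid g\in G\}$, i.e. the smallest subrepresentation of $(\pi^{\delta(l)}_{-l},\Hi^{\delta(l)})$ whose $K$-spectrum contains $\chi_n$; by Lemma \ref{Lemma Kern P R}, $\Ker(\mathcal{P}^\RA_{l,n})$ is the kernel of the projection of $\Hi^{\delta(l)}$ onto its smallest quotient containing $\chi_n$. Their intersection-quotient is therefore exactly the unique composition factor of $\pi^{\delta(l)}_{-l}$ whose $K$-types include $\chi_n$ — the ``smallest subquotient containing $\chi_n$''. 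To make the factor explicit, one reads off the composition series of $\pi^{\delta(l)}_{-l}$: by the duality between $\Hi^\delta_{m-1}$ and $\Hi^\delta_{-m+1}$ in Proposition \ref{Prop representations of SL2} (with $m=l+1$), it is Figure \ref{Fig Composition series of pi l} with the arrows reversed, the $l$-dimensional module now sitting as a submodule and $D^{l+1}$, $D^{-l-1}$ as the two quotients; so the factor is $D^{l+1}$ when $n\geq l+1$, $D^{-l-1}$ when $n\leq -l-1$, and the $l$-dimensional module when $-l+1\leq n\leq l-1$. The uniform formulation ``smallest subquotient containing $\chi_n$'' is what enters the statement.

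For the $K\times K$-type inclusion I would repeat the computations of Lemma \ref{Lemma KxK types} and of the proof of Lemma \ref{Lemma image T} on the right: inserting $\int_K dk$ on the appropriate side shows $\mathcal{T}^\RA_{l,n}$ annihilates $\mathcal{S}_{m_1,m_2}$ unless $m_1=n$, and the manipulation that gave $\mathcal{P}_{l,n}(\chi_m)\in\mathcal{S}_{m,n}$ now gives $\mathcal{P}^\RA_{l,n}(\chi_m)\in\mathcal{S}_{n,m}$. Hence every element of $\E_{l,n}$ lies in the closure of $\bigoplus_{j}\mathcal{S}_{n,j}$, with $j$ ranging over the $K$-spectrum of the composition factor found above, namely $j\in[n]$; this is the asserted inclusion, consistent with Lemma \ref{Lemma KxK types}.

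The step demanding the most care is pure bookkeeping: correctly dualising the submodule/quotient pattern of Figure \ref{Fig Composition series of pi l} when passing from $\pi^{\delta(l)}_{l}$ to $\pi^{\delta(l)}_{-l}$, so that ``smallest subrepresentation containing $\chi_n$'' and ``smallest quotient containing $\chi_n$'', and hence their intersection-quotient, are pinned down correctly in each of the three regimes of $n$. There is no new analytic input; in fact one can bypass this bookkeeping entirely by transporting Proposition \ref{Prop Eln repr} through the $(\g,K)$-module duality $\big(\pi^{\delta(l)}_{l}\big)^\ast\simeq\pi^{\delta(l)}_{-l}$, which exchanges sub- and quotient modules and carries ``smallest subquotient of $\pi^{\delta(l)}_l$ containing $\chi_n$'' to ``smallest subquotient of $\pi^{\delta(l)}_{-l}$ containing $\chi_n$''.
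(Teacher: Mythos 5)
Your proposal is correct and takes essentially the same route the paper intends: the paper offers no separate proof of Proposition \ref{Prop Eln repr R}, saying only that one repeats the arguments of Lemmas \ref{Lemma image T}, \ref{Lemma Kern P} and Proposition \ref{Prop Eln repr} with $\mathcal{T}_{l,n},\mathcal{P}_{l,n}$ replaced by $\mathcal{T}^\RA_{l,n},\mathcal{P}^\RA_{l,n}$ and $\pi^{\delta(l)}_{l}$ by $\pi^{\delta(l)}_{-l}$, which is exactly what you carry out (including the sensible reading of $\E_{l,n}$ as the image of the right-hand composite, of the union in \eqref{eq Eln} as an intersection, and the dualised composition series with the finite-dimensional piece now a submodule). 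Your closing observation that one could instead transport Proposition \ref{Prop Eln repr} through the duality $(\Hi^{\delta(l)}_{l})^{\ast}\simeq\Hi^{\delta(l)}_{-l}$ recorded in Proposition \ref{Prop representations of SL2} is a legitimate shortcut, but it is an aside rather than a different proof.
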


The principal series representations of $G\times G$ consist of tensor products of principal series representations of $G$ (see also \cite[II. \S 5]{Przebindathesis} for the irreducible unitary representations). 
Propositions \ref{Prop Eln repr} and \ref{Prop Eln repr R} give the image of $\Resi_l$ as the sum of three components where the $\mathcal{S}_{n,n}$ are, for all $n\in 2\Z+\delta(l)$.
One can remark that it's the smallest components containing the ‘‘diagonal’’ of $\mathcal{S}_{n,n}$, with $n\in 2\Z+\delta(l)$. Recall that we studied functions on the quotient $G\times G/\mathrm{diag}(G)\simeq G$ (see \cite[page 94]{Andersen01}). 

Let us draw a picture for a better understanding (see Figure \ref{Fig Composition series}). One bullet with abscissa $n$ and ordinate $m$ represents the $K\times K$-type $\chi_n\otimes \chi_m$ of $\E_l$ in $\mathcal{S}_{n,m}$. We drew this line in green. 
The gray lines with arrows represent the barriers of the composition series of the representation: the action  $(\LA\otimes\RA, G\times G)$ can only send an element through this line in the sense of the arrows attached to this line. The representation $\E_l$ is direct the sum of three irreducible subquotients. In the picture, they consists of the $K\times K$-types which are not slashed in red. 

\definecolor{light-gray}{gray}{0.6}
\begin{center}
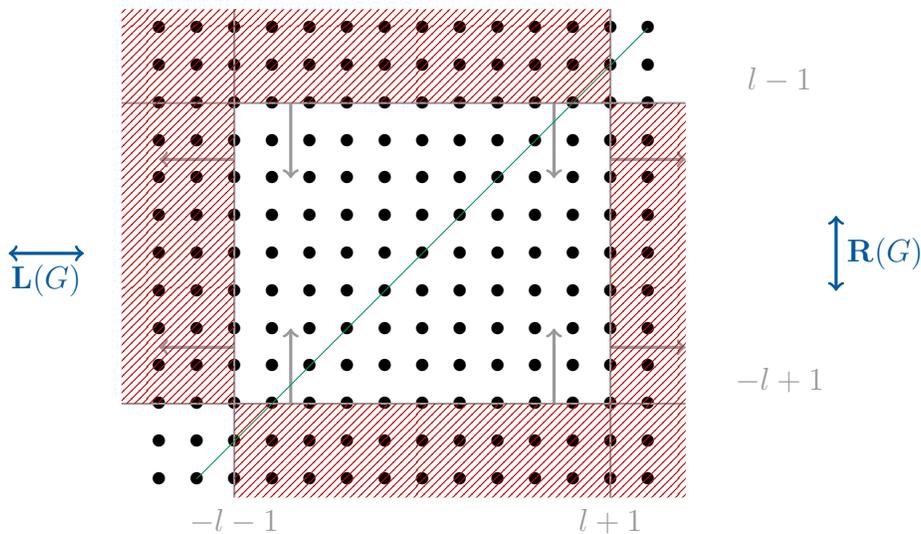

\begin{tikzpicture}[scale=1]
\foreach \j in {-6,...,6}{\foreach \i in {0,...,13}{\draw (-4+\i/2,\j/2) node{$\bullet$};}}
\draw [-,thick,light-gray](-4.5,2) to (3,2);
\draw [->,very thick,light-gray](-2.25,2) to (-2.25,1);
\draw [->,very thick,light-gray](1.25,2) to (1.25,1);
\draw (4.25,2) node[above] {\color{light-gray}$l-1$};
\draw [-,thick,light-gray](-4.5,-2) to (3,-2);
\draw [->,very thick,light-gray](-2.25,-2) to (-2.25,-1);
\draw [->,very thick,light-gray](1.25,-2) to (1.25,-1);
\draw (4.25,-2) node[above] {\color{light-gray}$-l+1$};
\draw [-,thick,light-gray](-3,-3.25) to (-3,3.25);
\draw [->,very thick,light-gray](-3,1.25) to (-4,1.25);
\draw [->,very thick,light-gray](-3,-1.25) to (-4,-1.25);
\draw (-3,-3.25) node[below] {\color{light-gray}$-l-1$};
\draw [-,thick,light-gray](2,-3.25) to (2,3.25);
\draw [->,very thick,light-gray](2,1.25) to (3,1.25);
\draw [->,very thick,light-gray](2,-1.25) to (3,-1.25);
\draw (2,-3.25) node[below] {\color{light-gray}$l+1$};
\draw [<->,very thick,Bleu](-6,0) to (-5,0);
\draw (-5.5,0) node[below] {\color{Bleu}$\LA(G)$};
\draw [<->,very thick,Bleu](5,-0.5) to (5,0.5);
\draw (5,0) node[right] {\color{Bleu}$\RA(G)$};
\draw [-,Vert] (-3.5,-3) to (2.5,3);
\fill [pattern=north east lines,pattern color=Rouge] (-4.5,-2) to (-3,-2) to (-3,2) to (2,2) to (2,3.25) to (-4.5,3.25);
\fill [pattern=north east lines,pattern color=Rouge] (3,2) to (2,2) to (2,-2) to (-3,-2) to (-3,-3.25) to (3,-3.25);
\end{tikzpicture}
\captionof{figure}{Composition series of the residue representation}\label{Fig Composition series}
\end{center}

In Figure \ref{Fig Composition series} we chose $l=9$. For $l=0$, the finite dimensional representation in the middle disappears and one has just two infinite dimensional components in $\E_l$. 

We introduce some notations to give the result in terms of representations of $\wt G$. 
Let $\wt P := \wt M\wt A\wt N$ be the minimal parabolic subgroup of $\wt G$ given by 
\begin{equation}
    \begin{array}{c}
      \wt A:= \{\mathrm{diag}(a_1,a_2,a_1^{-1},a_2^{-1})~|~ a_1,a_2\in \R_+\}~;
      \wt M:= \{\mathrm{diag}(\varepsilon_1,\varepsilon_2,\varepsilon_1,\varepsilon_2)~|~\varepsilon_1,\varepsilon_2=\pm 1\}~;\\
      \mathrm{Lie}(\wt N) = \wt\n:= \left\{{\tiny\begin{pmatrix}
    0&x&0&y\\0&0&-y&0\\0&0&0&0\\0&0&-x&0
\end{pmatrix}}~|~x,y\in \R\right\}~.
    \end{array}
\end{equation}
The representations of $\wt M$ are in $\hat{M}\otimes\hat M$. 

Let $\wt P_1 := \wt M_1\wt A_1\wt N_1$ be the other parabolic subgroup of $\wt G$ given by
\begin{equation}
    \begin{array}{c}
      \wt A_1:= \{\mathrm{diag}(a,a,a^{-1},a^{-1})~|~ a\in \R_+\}~;
      \wt M_1:= \{\mathrm{diag}(g,({}^tg)^{-1})~|~g\in G\}~;\\
      \mathrm{Lie}(\wt N_1) = \wt\n_1:= \left\{{\tiny\begin{pmatrix}
0&y\\0&0
\end{pmatrix}}~|~y\in K\right\}~.
    \end{array}
\end{equation}

There is no other parabolic subgroup. See \cite[2.2.2]{Przebindathesis}. The attentive reader has observed that these parabolic subgroups are not in $\wt G$ for our definition in Section \ref{section context}, but they are the parabolic subgroups of the group isomorphic to $\wt G$ via the conjugation by 
$$\frac{\sqrt{2}}2\begin{pmatrix}
    I_2&I_2\\I_2&-I_2
\end{pmatrix}.$$
See \cite[1.1.9]{Przebindathesis}. 
One Cartan subgroup in $\wt P$ is $\wt H:= \wt M\wt A$ and one in $\wt P_1$ is $\wt H_1:= \wt T_1\wt A_1$ where $\wt T_1$ is the Cartan subgroup of $\wt M_1$ defined by 
\begin{equation}
    \wt T_1 : = \{\mathrm{diag}(k,k)~|~k\in K\}~.
\end{equation}

\begin{Thm}\label{Thm residue representations}
    The residue representations $\E_l$ of the group $\wt G$ arising from the resonances of the d'Alembertian, acting on $C_c^\infty(\mathbb{X})$ is the sum of three irreducible unitarisable representations if $l\ne0$. One is finite dimensional with Langlands parameters $(\wt P,e^{l\wt\rho},\delta(l)\otimes\delta(l))$. The other two are infinite dimensional and mutually contragredient. 
    The Langlands parameters of one of them are $(\wt P_1,\triv_{A_1},\chi_{2l+2})$. 
    The representation $\E_0$ is the sum of the two infinite dimensional representations given for $l=0$. 
\end{Thm}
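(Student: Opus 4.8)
The plan is to assemble the theorem from the component analyses already established for the left and right $G$-actions, and then translate the resulting $(G\times G)$-picture into the language of parabolic subgroups of $\wt G$. First I would invoke Propositions \ref{Prop Eln repr} and \ref{Prop Eln repr R}: the space $\E_{l,n}$, as a left $G$-module, is the smallest subquotient of $(\pi^{\delta(l)}_l,\Hi^{\delta(l)}_l)$ containing $\chi_n$, and as a right $G$-module the smallest subquotient of $(\pi^{\delta(l)}_{-l},\Hi^{\delta(l)})$ containing $\chi_n$. Because the left and right actions commute, $\E_l=\sum_{n\in 2\Z+\delta(l)}\E_{l,n}$ carries a $(G\times G)$-action, and reading off Figure \ref{Fig Composition series} one sees the $K\times K$-types split into exactly three blocks along the diagonal $\mathcal{S}_{n,n}$: the two infinite ``wedges'' (with $n$ large positive/negative, i.e. $|n|\ge l+1$) and the finite ``diamond'' in the middle ($|n|\le l-1$). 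I would check that each block is closed under $\LA\otimes\RA$ using the arrow conventions of the composition series (the action cannot cross a barrier against its arrows), that each block is irreducible (no proper invariant subspace survives once both factors' subquotient structure is imposed), and that the decomposition is a genuine direct sum. Unitarizability of each piece follows since the finite-dimensional block is a finite-dimensional representation of $\wt G\simeq G\times G/\pm\Id$ and the two infinite-dimensional blocks are each an (outer) tensor product $D^{\pm m}\otimes D^{\mp m}$ of discrete series of $G$, hence unitary; that they are mutually contragredient is immediate from the duality $\Hi^\delta_{m-1}\cong (\Hi^\delta_{-m+1})^\vee$ in Proposition \ref{Prop representations of SL2} applied in each factor, exchanging the roles of $\pi_l$ and $\pi_{-l}$.

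Next I would identify the Langlands parameters. For the finite-dimensional block: a finite-dimensional irreducible representation of $\wt G$ is, via the isomorphism $\wt G\simeq G\times G/\pm\Id$, an outer tensor product of finite-dimensional representations of $G=\SL(2,\R)$, each of which is the unique finite-dimensional constituent of $\Hi^{\delta(m-1)}_{m-1}$ of dimension $m-1$; hence it is the Langlands quotient of $\Ind_{\wt P}^{\wt G}(\delta(l)\otimes\delta(l)\otimes e^{l\wt\rho}\otimes\triv)$, giving parameters $(\wt P, e^{l\wt\rho},\delta(l)\otimes\delta(l))$. Here I would just need to match the infinitesimal character / highest weight coming from $i\lambda=l$ in each $G$-factor against the $\wt A$-parameter $e^{l\wt\rho}$, using the explicit embedding of $\wt P=\wt M\wt A\wt N$. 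For the infinite-dimensional blocks: each is $D^{\pm(l+1)}\otimes D^{\mp(l+1)}$ as a $(G\times G)$-module, and the point is that this particular tensor product is induced from the \emph{non-minimal} parabolic $\wt P_1=\wt M_1\wt A_1\wt N_1$ with $\wt M_1\simeq G$, with trivial $\wt A_1$-character and with $\wt M_1$-representation a discrete series of $G$ — namely $D^{2l+2}$, which as a representation of $\wt T_1=\{\mathrm{diag}(k,k)\}$ reduces appropriately; this is what the notation $(\wt P_1,\triv_{A_1},\chi_{2l+2})$ records. I would verify this by computing the restriction to $\wt T_1\wt A_1$ and matching $K\times K$-types: the diagonal $\mathcal{S}_{n,n}$ line in Figure \ref{Fig Composition series} is exactly the set of $\wt T_1$-types of a single discrete series $D^{2l+2}$ of $\wt M_1\simeq G$, so $\Ind_{\wt P_1}^{\wt G}$ of it produces precisely the wedge block. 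The case $l=0$ is handled by noting the diamond is empty (dimension $m-1=0$), leaving only the two infinite-dimensional pieces, consistent with the $\lambda=0$ decomposition $\Hi^1_0$ in Proposition \ref{Prop representations of SL2}.

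The main obstacle I expect is the bookkeeping in the identification of the infinite-dimensional blocks with $\Ind_{\wt P_1}^{\wt G}$ of a discrete series of $\wt M_1\simeq G$: one must be careful that $\wt P_1$ is stated up to the conjugation by $\tfrac{\sqrt2}{2}\left(\begin{smallmatrix}I_2&I_2\\I_2&-I_2\end{smallmatrix}\right)$ (as the paper flags), so the explicit realization of the discrete series of $\wt M_1$ inside functions on $\mathbb{X}\simeq G$ has to be reconciled with the left/right $G$-action, and the parameter $2l+2$ (versus $l+1$ in each factor) must be tracked through this conjugation and through the relation $G\times G/\mathrm{diag}(G)\simeq G$. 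A secondary but routine point is to confirm that the three blocks are mutually non-isomorphic and that no extension between them survives at the level of $\wt G$-modules, i.e. that the sum is direct rather than merely a filtration; this follows from the barrier/arrow analysis in Figure \ref{Fig Composition series} since the arrows on the two diagonal-separating barriers point \emph{outward}, so the wedges are quotients and the diamond is a sub, but because the representation $\E_l$ is realized inside $C^\infty(G)$ by the explicit Poisson/truncation maps $\mathcal{P}_{l,n}\circ\mathcal{T}_{l,n}$ and these kill exactly the complementary subquotients, the image is the direct sum. Everything else is a matter of transcribing Propositions \ref{Prop Eln repr} and \ref{Prop Eln repr R} and citing \cite{Przebindathesis} for the classification of parabolics and induced representations of $\wt G$.
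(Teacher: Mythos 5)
Your reduction to the three-block picture, your treatment of the finite-dimensional piece, the contragredience of the two wedges, and the $l=0$ degeneration all agree with the paper. The gap is in your identification of the Langlands parameters of the infinite-dimensional pieces, and it is not a bookkeeping issue that more care would repair: the two objects you propose to identify are not isomorphic as you have described them. You describe the wedge $\eta_l$ as an outer tensor product of a lowest-weight and a highest-weight module, one for each of the two commuting $\SL(2,\R)$-actions, with $K\times K$-types filling a quadrant and each one-sided restriction a multiple of a single irreducible $D^{\pm(l+1)}$ (this is consistent with Propositions \ref{Prop Eln repr} and \ref{Prop Eln repr R} and Figure \ref{Fig Composition series}). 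On the other hand, for $\Ind_{\wt P_1}^{\wt G}(\sigma\otimes e^{\nu}\otimes 1)$ with $\sigma$ a discrete series of $\wt M_1$: since $\mathrm{Lie}(\wt M_1)$ is one of the two simple ideals of $\soe(2,2)$ while $\wt A_1\wt N_1$ sits in the other, this induced representation restricts to the ideal complementary to $\wt M_1$ as a full principal series, and by Frobenius reciprocity its $\wt K$-types are all $\chi_a\otimes\chi_b$ whose restriction to $\wt T_1$ occurs in $\sigma|_{\wt T_1}$ --- a half-plane, not a quadrant. Matching $\wt T_1$-types ``along the diagonal'' therefore cannot establish the isomorphism: it only compares restrictions to a one-dimensional torus, and here the full $\wt K$-spectra already disagree. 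Your argument is in fact internally inconsistent on this point: a representation that is an outer tensor product of two discrete series of $\SL(2,\R)$ would be a discrete series of $\wt G$, and such a representation cannot also be unitarily induced (with $\nu=0$) from the proper cuspidal parabolic $\wt P_1$.

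The paper reaches the parameters by a different, indirect route that you would need to adopt (or else actually construct an intertwining operator): it computes the minimal $\wt K$-type of $\eta_l$ in Vogan's sense, matches it against Przebinda's exhaustive list of Langlands parameters for $\wt G$ --- which singles out the one-parameter family attached to $\wt P_1$ with $\wt T_1$-parameter $\chi_{2l+2}$ as the only candidates with that lowest $\wt K$-type --- and then pins down $\nu=0$ by transporting the infinitesimal character from $\wt H$ to $\wt H_1$ via the Cayley transform associated to $\epsilon_1-\epsilon_2$. The coordinate changes you defer as ``bookkeeping'' (the conjugation by $\frac{\sqrt2}{2}\left(\begin{smallmatrix}I_2&I_2\\I_2&-I_2\end{smallmatrix}\right)$, the passage between $\wt K$-types and $K\times K$-types) are exactly where your direct identification breaks, so they are the substance of the proof rather than a routine afterthought.
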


\begin{proof}
    Suppose first that $l\ne 0$. $\E_l$ is the sum of three irreducible representations.
One can check that the finite dimensional representation is exactly the unique irreducible quotient of the principal series representation $(\pi^{\delta(l)\otimes\delta(l)}_{l},\Hi^{\delta(l)\otimes\delta(l)})$ induced from $\delta(l)\otimes\delta(l)\in \hat{\wt M}$ and $e^{l\wt\rho}\in \hat{\wt A}$, where $\wt\rho$ is the half sum of the positive roots for the group $\wt G$. The representation $\pi^{\delta(l)\otimes\delta(l)}_{l}$ can be seen as the same as Figure \ref{Fig Composition series} but with the grey horizontal lines shifted respectively from $l-1$ and $-l+1$ to $l+1$ and $-l-1$, and reversing the arrows of these lines. 

The two infinite dimensional representations are mutually contragredient. Let us find the Langlands parameters of the top right corner in Figure \ref{Fig Composition series}. We denote it by $\eta_l$. We consider here also the case ‘$l=0$’. 
If it's not a discrete series representation, it is isomorphic to a Langland quotient (see \cite[Theorem 8.54]{Kna1}). We have to find the parabolic subgroup $P$ and its representation $\sigma$ (also called Langlands parameters) corresponding to this Langlands quotient. The minimal $\wt K$-types of $\eta_l$ and $\Ind_P^{\wt G}\sigma$ have to be the same, in terms of Vogan norm (see \cite[Sections 4.1,4.2,6.5,6.6]{VoganRRRLG}). 
The minimal $\wt K$-type of $\eta_l$ is $\chi_{l+1}\otimes \chi_{l+1}$. In \cite[section 2.5]{Przebindathesis} is listed all the possible Langlands parameters for irreducible admissible representations of $\wt G$. The only with minimal $K$-type $\chi_{l+1}\otimes \chi_{l+1}$ (named $\pi_{l+1,l+1}$ there) is the parameters attached to $\wt P_1$, $\chi_{2l+2}\in \hat{\wt {T_1}}$ and $e^{\nu}\in \hat{\wt {A_1}}$ for $\nu\in \a_\C^*$, with $\Re\nu>0$ or $\Re\nu=0$ and $\Im\nu\geq0$ (see \cite[2.5.24.]{Przebindathesis}).
We denote by $\alpha_c$ and $\alpha_{nc}$ the fundamental weights attached respectively to $\wt T_1$ and $\wt A_1$, and by $\epsilon_1, \epsilon_2$ the ones attached to $\wt H$. 
We know that the infinitesimal character on $\wt \h = \mathrm{Lie}(\wt H)$ is $(l+1)\epsilon_1 +(-l-1) \epsilon_2$ (it's just the derivative of $e^{(l+1)\alpha}\otimes e^{(-l-1)\alpha}$, the two roots $2\epsilon_1, 2\epsilon_2$ being real and restrcting to $\alpha$). We then apply the Cayley transform from $\wt H$ to $\wt H_1$ associated to $\epsilon_1-\epsilon_2$. It sends the root $\epsilon_1-\epsilon_2$ on the compact root $2\alpha_c$. So $(l+1)\epsilon_1 +(-l-1) \epsilon_2$ is sent on $2(l+1)\alpha_c$. This is the infinitesimal character of $\Ind_{\wt M_1\wt A_1 \wt N_1}^{\wt G}(\chi_{2l+2}\otimes e^{\nu}\otimes 1)$ when $\nu=0$. So the real part $\nu$ from $\a^*_\C$ has to be $0$. 
\end{proof}

\bibliographystyle{alpha}
\bibliography{Biblio}
\end{document}